\documentclass[12pt, a4paper]{amsart}
\usepackage{graphicx}

\usepackage[a4paper, top=3cm, left=3cm, right=2cm, bottom=3cm]{geometry}
\usepackage{amsmath, amsthm}
\usepackage{amssymb}
\usepackage{multicol,comment}
\usepackage{amsfonts}
\usepackage{color, enumerate, fancyhdr, dsfont}
\usepackage[utf8]{inputenc}

\usepackage[normalem]{ulem}

\overfullrule=5pt

%




    \newcommand{\R}{\mathbb{R}}




    \newcommand{\ran}{\mbox{\rm ran}}



    \newcommand{\seq}{\mbox{\rm sq}}












    \newcommand{\Awf}{\mathcal{A}}
    
    \newcommand{\Cwf}{\mathcal{C}}

    \newcommand{\Iwf}{\mathcal{I}}

    \newcommand{\Mwf}{\mathcal{M}}
    \newcommand{\Nwf}{\mathcal{N}}
    
    \newcommand{\Pwf}{\mathcal{P}}

    \newcommand{\minadd}{\mathrm{minadd}}
    
    \newcommand{\supcof}{\mathrm{supcof}}
     \newcommand{\Seq}{\mathrm{sq}}
   \newcommand{\hgt}{\mathrm{ht}}
    \newcommand{\afrak}{\mathfrak{a}}
    \newcommand{\bfrak}{\mathfrak{b}}
    \newcommand{\cfrak}{\mathfrak{c}}
    \newcommand{\dfrak}{\mathfrak{d}}

    \newcommand{\ifrak}{\mathfrak{i}}

    \newcommand{\ufrak}{\mathfrak{u}}

    \newcommand{\menos}{\smallsetminus}

    \newcommand{\add}{\mbox{\rm add}}
    \newcommand{\cov}{\mbox{\rm cov}}
    \newcommand{\non}{\mbox{\rm non}}
    \newcommand{\cof}{\mbox{\rm cof}}
    
    \newcommand{\limdir}{\mbox{\rm limdir}}


    \newcommand{\Eor}{\mathbb{E}}

    \newcommand{\Por}{\mathbb{P}}
    \newcommand{\Qor}{\mathbb{Q}}
    
    \newcommand{\Sor}{\mathbb{S}}
    \newcommand{\Tor}{\mathbb{T}}

    \newcommand{\cf}{\mbox{\rm cf}}















\title[The covering number of the strong measure zero ideal]{The covering number of the strong measure zero ideal can be above almost everything else}


\author{Miguel A. Cardona}
\address{Institute of Discrete Mathematics and Geometry, TU Wien, Wiedner Hauptstrasse 8--10/104 A--1040 Wien, Austria.}
\email{miguel.montoya@tuwien.ac.at}
\urladdr{https://www.researchgate.net/profile/Miguel\_Cadona\_Montoya}

\author{Diego A. Mej\'ia}
\address{Creative Science Course (Mathematics), Faculty of Science, Shizuoka University, Ohya 836, Suruga-ku, Shizuoka-shi, Japan 422-8529.}
\email{diego.mejia@shizuoka.ac.jp}
\urladdr{http://www.researchgate.com/profile/Diego\_Mejia2}

\author{Ismael E. Rivera-Madrid}
\address{Faculty of Engineering, Instituci\'on Universitaria Pascual Bravo. Calle 73 No. 73A - 226, Medell\'in, Colombia.}
\email{ismael.rivera@pascualbravo.edu.co}

\thanks{This work was supported by the Austrian Science Fund (FWF) P30666 (first author), the Grant-in-Aid for Early Career Scientists 18K13448, Japan Society for the Promotion of Science (second author), and by the grant no. IN201711, Direcci\'on Operativa de Investigaci\'on, Instituci\'on Universitaria Pascual Bravo (second and third authors).}


\subjclass[2010]{03E17, 03E35, 03E40.}

\keywords{Strong measure zero sets, cardinal invariants, Sacks model.}

\begin{document}

\makeatletter
\def\@roman#1{\romannumeral #1}
\makeatother

\theoremstyle{plain}
  \newtheorem{theorem}{Theorem}[section]
  \newtheorem{corollary}[theorem]{Corollary}
  \newtheorem{lemma}[theorem]{Lemma}
  \newtheorem{mainlemma}[theorem]{Main Lemma}
  \newtheorem{prop}[theorem]{Proposition}
  \newtheorem{claim}[theorem]{Claim}
  \newtheorem{exer}[theorem]{Exercise}
\theoremstyle{definition}
  \newtheorem{definition}[theorem]{Definition}
  \newtheorem{example}[theorem]{Example}
  \newtheorem{remark}[theorem]{Remark}
  \newtheorem{context}[theorem]{Context}
  \newtheorem{question}[theorem]{Question}
  \newtheorem{problem}[theorem]{Problem}
  \newtheorem{notation}[theorem]{Notation}

  \newcommand{\azul}[1]{{\color{blue}#1}}
\newcommand{\rojo}[1]{{\color{red}#1}}
\newcommand{\tachar}[1]{{\color{red}\sout{#1}}}
\definecolor{amber}{rgb}{1.0,0.49,0.0}

\definecolor{ogreen}{RGB}{107,142,35}

\newcommand{\verde}[1]{{\color{ogreen}#1}}
\newcommand{\amber}[1]{{\color{amber}#1}}

\newcommand{\Fn}{\mathrm{Fn}}
\newcommand{\leqT}{\preceq_{\mathrm{T}}}
\newcommand{\eqT}{\cong_{\mathrm{T}}}
\newcommand{\la}{\langle}
\newcommand{\ra}{\rangle}
\newcommand{\id}{\mathrm{id}}
\newcommand{\Lv}{\mathrm{Lv}}
\newcommand{\sig}{\boldsymbol{\Sigma}}
\newcommand{\spl}{\mathrm{spl}}
\newcommand{\st}{\mathrm{st}}
\newcommand{\suc}{\mathrm{succ}}
\newcommand{\cosig}{\boldsymbol{\Pi}}
\newcommand{\Lb}{\mathrm{Lb}}
\newcommand{\pw}{\mathrm{pw}}

\newcommand{\SNcal}{\mathcal{SN}}
\newcommand{\Fr}{\mathrm{Fr}}
\newcommand{\Dbf}{\mathbf{D}}
\newcommand{\Cbf}{\mathbf{C}}
\newcommand{\Rbf}{\mathbf{R}}

\newcommand{\Ibb}{\mathbb{I}}
\newcommand{\PTbb}{\mathbb{PT}}
\newcommand{\Qbb}{\mathbb{Q}}
\newcommand{\Tbb}{\mathbb{T}}

\begin{abstract}
   We show that certain type of tree forcings, including Sacks forcing, increases the covering of the strong measure zero ideal $\mathcal{SN}$. As a consequence, in Sacks model, such covering number is equal to the size of the continuum, which indicates that this covering number is consistently larger than any other classical cardinal invariant of the continuum. Even more, Sacks forcing can be used to force that $\mathrm{non}(\mathcal{SN})<\mathrm{cov}(\mathcal{SN})<\mathrm{cof}(\mathcal{SN})$, which is the first consistency result where more than two cardinal invariants associated with $\mathcal{SN}$ are pairwise different. Another consequence is that $\mathcal{SN}\subseteq s^0$ in ZFC where $s^0$ denotes the Marczewski's ideal.
\end{abstract}

\maketitle

\section{Introduction}\label{SecIntro}

This paper is focused on new consistency results about cardinal invariants associated with the strong measure zero ideal. Recall that \emph{the cardinal invariants associated with an ideal $\Iwf\subseteq\Pwf(X)$} are
\begin{itemize}
\item[{}]$\add(\Iwf):=\min\{|\Awf|:\Awf\subseteq\Iwf\text{\ and } \bigcup\Awf\notin\Iwf\}$ \emph{the additivity of $\Iwf$}; 

\item[{}]$\cov(\Iwf):=\min\{|\Cwf|:\Cwf\subseteq\Iwf\text{\ and }\bigcup\Cwf=X\}$ \emph{the covering of $\Iwf$}; 
    
\item[{}]$\non(\Iwf):=\min\{|Z|:Z\subseteq X\text{\ and }Z\notin\Iwf\}$ \emph{the uniformity of $\Iwf$};  
    
\item[{}]$\cof(\Iwf):=\min\{|\Cwf|:\Cwf\subseteq\Iwf\text{\ is cofinal in }\la\Iwf,\subseteq\ra\}$ \emph{the cofinality of $\Iwf$}.  
\end{itemize}

In this context, we assume that ideals on $\Pwf(X)$ contain all the finite subsets of $X$. Under this assumption, the inequalities indicated in Figure \ref{FigId} can be proved in ZFC.

A very classical instance of cardinal invariants is \emph{Cicho\'n's diagram} (Figure \ref{FigCichon}), which is composed by the cardinal invariants associated with the ideal $\Mwf$ of meager subsets of $\R$ and with the ideal $\Nwf$ of Lebesgue-measure subsets of $\R$, by the bounding number $\bfrak$ and dominating number $\dfrak$ (reviewed in Section \ref{SecPre}), and by $\cfrak=|\R|=2^{\aleph_0}$. This diagram is complete in the sense that no other inequalities can be proved (see e.g. \cite{BJ} for all the details).

Denote by $\SNcal$ the ideal of strong measure zero subsets of $\R$. In relation with the cardinal invariants in Cicho\'n's diagram, the following is provable in ZFC:
\begin{enumerate}[({SN}1)]
    \item $\add(\Nwf)\leq\add(\SNcal)$ (Carlson \cite{Carlson}),
    \item $\cov(\Nwf)\leq\cov(\SNcal)\leq\cfrak$,
    \item $\cov(\Mwf)\leq\non(\SNcal)\leq\non(\Nwf)$ and $\add(\Mwf)=\min\{\bfrak,\non(\SNcal)\}$ (Miller \cite{Miller}),
    \item $\cof(\SNcal)\leq 2^\dfrak$ (see \cite{Osuga}).
\end{enumerate}

On the other hand, the following inequalities are \emph{consistent with ZFC}:
\begin{enumerate}[({C}1)]
    \item $\cof(\Mwf)<\add(\SNcal)$ (Goldstern, Judah and Shelah \cite{GJS}),
    \item $\cov(\SNcal)<\add(\Mwf)$ (Pawlikowski \cite{P90}),
    \item $\non(\SNcal)<\min\{\bfrak,\cov(\Nwf)\}$ (Hechler's model followed by random model),
    \item $\cfrak<\cof(\SNcal)$ (from CH),
    \item $\cof(\SNcal)<\cfrak$ (Yorioka \cite{Yorioka}).
\end{enumerate}

\begin{figure}
    \centering
  \includegraphics{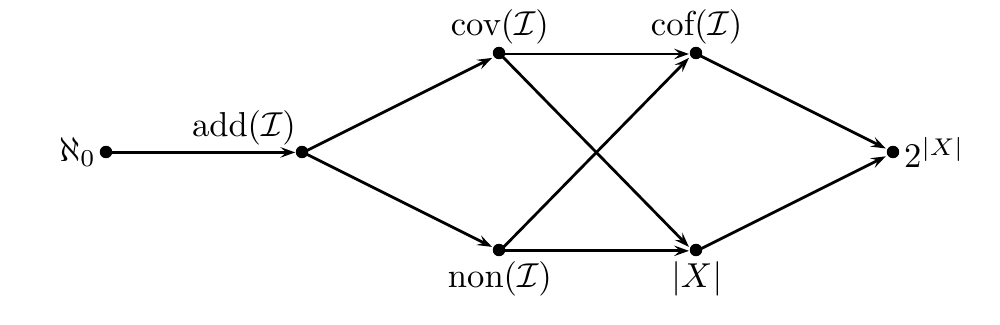}
  \caption{The arrows mean that $\leq$ is provable in ZFC.}
  \label{FigId}
\end{figure}

In fact, the forcing model constructed in (C1) satisfies $\dfrak=\aleph_1$ and $\SNcal=[\R]^{\leq\aleph_1}$, so $\cof(\SNcal)=\cfrak=\aleph_2$ whenever $2^{\aleph_1}=\aleph_2$ in the ground model. The equality $\cof(\SNcal)=\cfrak$ also follows from Borel's conjecture $\SNcal=[\R]^{\leq\aleph_0}$, which was proven consistent with ZFC by Laver \cite{Laver}.

The consistency results above show that no other inequality between $\add(\SNcal)$ and another cardinal in Cicho\'n's diagram can be proved, and the same can be said about $\non(\SNcal)$. However, unsolved problems about $\cov(\SNcal)$ and $\cof(\SNcal)$ still remain.

   \begin{enumerate}[({Q}1)]
       \item Is there a classical cardinal invariant of the continuum (different from $\cfrak$ and $\cof(\SNcal)$) that is an upper bound of $\cov(\SNcal)$? In particular, is $\cov(\SNcal)\leq\cof(\Nwf)$?
       \item Is there a classical cardinal invariant of the continuum (different from $\cov(\Nwf)$ and $\cov(\Mwf)$)\footnote{Obvious lower bounds of $\cof(\SNcal)$ are $\cov(\Nwf)$ and $\cov(\Mwf)$ because of (SN2) and (SN3), respectively.} that is a lower bound of $\cof(\SNcal)$? In particular, is $\add(\Mwf)\leq\cof(\SNcal)$? Is $\cof(\Nwf)\leq\cof(\SNcal)$?
   \end{enumerate}

In this work, we answer (Q1) in the negative, that is, we show that $\cov(\SNcal)$ is consistently larger than $\cof(\Nwf)$ and even larger than any classical cardinal invariant of the continuum (like the \emph{almost disjointedness number} $\afrak$, the \emph{independence number} $\ifrak$ and the \emph{ultrafilter number} $\ufrak$, which are maximal among classical cardinal invariants of the continuum that could be below $\cfrak$). In fact, we show that $\cov(\SNcal)=\cfrak=\aleph_2$ in Sacks model (where any classical cardinal invariant of the continuum is $\aleph_1$).

In addition to this, we prove the consistency of $\non(\SNcal)<\cov(\SNcal)<\cof(\SNcal)$. This is the first consistency result where more than two cardinal invariants associated with $\SNcal$ are pairwise different. For this proof, we use Yorioka's characterization of $\cof(\SNcal)$ (\cite{Yorioka}, Theorem \ref{Ycof} in this text).

The core of these results are Main Lemma \ref{mainlemma} and Theorem \ref{SacksSN}, which states that a type of tree forcings (Definition \ref{Defbtree}), and their iterations, increases $\cov(\SNcal)$. In terms of ideals, this implies that $\SNcal\subseteq s^0$ where  $s^0=\{X\subseteq2^\omega:\forall p\in\Sor\exists q\leq p([q]\cap X=\emptyset)\}$ is the \emph{Marczewski's ideal} (originally defined in \cite{Mar}) and $\Sor$ denotes \emph{Sacks forcing}, so $\cov(s^0)\leq\cov(\SNcal)$.\footnote{Also $\non(\SNcal)\leq\non(s^0)$, but  $\non(s^0)=\cfrak$ because $[2^\omega]^{<\cfrak}\subseteq s^0$.} 

This paper is structured as follows. In Section \ref{SecPre} we review the basic notation and the results this paper is based on. In Section \ref{SecPres} we present preservation results related to the dominating number of $\kappa^\kappa$ for $\kappa$ regular, this to ensure that $\cof(\SNcal)$ can be manipulated as desired via Yorioka's characterization theorem. In Section \ref{SecMain} we prove our main results, even more, we show that a type of tree forcings, when iterated, increases $\cov(\SNcal)$. Section \ref{SecDisc} is dedicated to discussions and open questions.

\begin{figure}
\begin{center}
  \includegraphics{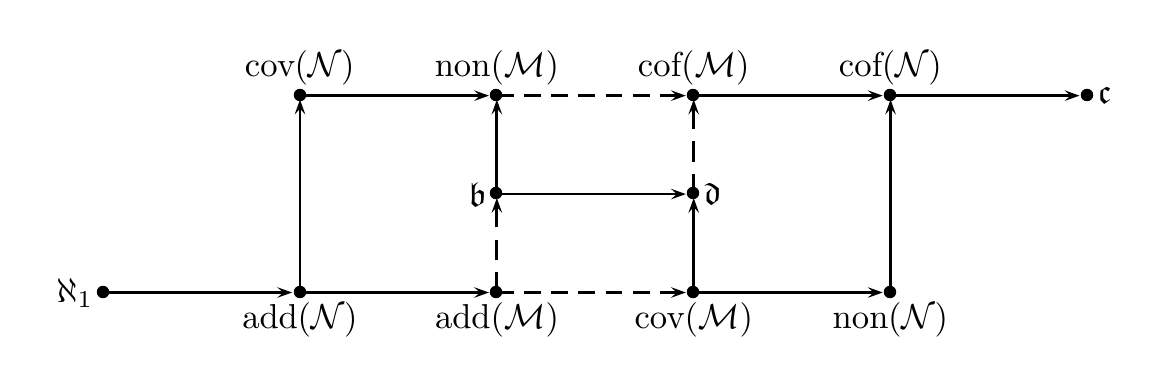}
  \caption{Cicho\'n's diagram. The arrows mean $\leq$ and dotted arrows represent
  $\add(\Mwf)=\min\{\bfrak,\cov(\Mwf)\}$ and $\cof(\Mwf)=\max\{\dfrak,\non(\Mwf)\}$.}
  \label{FigCichon}
\end{center}
\end{figure}



\section{Preliminaries}\label{SecPre}

We start with a short review of the \emph{Tukey order}. A \emph{relational system} is a triplet $\Rbf=\la X,Y,\sqsubset\ra$ where $\sqsubset$ is a relation. Such a relational system has two cardinal invariants associated with it:
\begin{itemize}
    \item[{}] $\bfrak(\Rbf):=\min\{|F|:F\subseteq X\text{\ and }\neg\exists y\in Y\forall x\in X(x\sqsubset y)\}$,
    \item[{}] $\dfrak(\Rbf):=\min\{|D|:D\subseteq Y\text{\ and }\forall x\in X \exists y\in D(x\sqsubset y)\}$.
\end{itemize}
These cardinals do not exists in general, for example, $\bfrak(\Rbf)$ does not exists iff $\dfrak(\Rbf)=1$; likewise, $\dfrak(\Rbf)$ does not exists iff $\bfrak(\Rbf)=1$.

Let $\Rbf':=\la X',Y',\sqsubset'\ra$ be another relational system. Say that \emph{$\Rbf$ is Tukey below $\Rbf'$}, denoted by $\Rbf\leqT\Rbf'$, if there are maps $F:X\to X'$ and $G:Y'\to Y$ such that, for any $x\in X$ and $y'\in Y'$, if $F(x)\sqsubset' y'$ then $x\sqsubset G(y')$. Say that \emph{$\Rbf$ and $\Rbf'$ are Tukey equivalent}, denoted by $\Rbf\eqT\Rbf'$, if $\Rbf\leqT\Rbf'$ and $\Rbf'\leqT\Rbf$. Note that $\Rbf\leqT\Rbf'$ implies $\bfrak(\Rbf')\leq\bfrak(\Rbf)$ and $\dfrak(\Rbf)\leq\dfrak(\Rbf')$.

Let $\kappa$ be an infinite cardinal. For any $x,y\in\kappa^\kappa$ write $x\leq y$ for $\forall i<\kappa(x(i)\leq y(i))$, and define $x<y$ similarly. Define the relation $x\leq^* y$ by $\exists i_0<\kappa\forall i\geq i_0(x(i)\leq y(i))$, which we read \emph{$y$ dominates $x$}. Say that $D\subseteq\kappa^\kappa$ is \emph{dominating (in $\kappa^\kappa$)} if it is cofinal in $\la\kappa^\kappa,\leq^*\ra$, that is, every function in $\kappa^\kappa$ is dominated by some member of $D$. Denote $\Dbf_\kappa:=\la\kappa^\kappa,\kappa^\kappa,\leq^*\ra$ and $\Dbf:=\Dbf_\omega$. Define the cardinal invariants $\bfrak_\kappa:=\bfrak(\Dbf_\kappa)$ and $\dfrak_\kappa:=\dfrak(\Dbf_\kappa)$.
The classical unbounded and dominating numbers are $\bfrak:=\bfrak_\omega$ and $\dfrak:=\dfrak_\omega$, respectively.

The relational system $\Dbf_1$ defined below is relevant in the proof of the main results.

\begin{definition}\label{DefIntRel}
   Denote by $\Ibb$ the set of interval partitions of $\omega$. Define the relational systems $\Dbf_1:=\la\Ibb,\Ibb,\sqsubseteq\ra$ and $\Dbf_2:=\la\Ibb,\Ibb,\ntriangleright\ra$ where, for any $I,J\in\Ibb$,
   \[ I \sqsubseteq J \text{\ iff } \forall^\infty n\exists m(I_m\subseteq J_n); \quad
      I \ntriangleright J \text{\ iff } \forall^\infty n\forall m( I_n\nsupseteq J_m).
   \]
   For each $I\in\Ibb$ we define $f_I:\omega\to\omega$ and $I^{*2}\in\Ibb$ such that $f(n):=\min I_{n}$ and $I^{*2}_n:=I_{2n}\cup I_{2n+1}$. For each increasing $f\in\omega^\omega$ define the increasing function $f^*:\omega\to\omega$ such that $f^*(0)=0$ and $f^*(n+1)=f(f^*(n)+1)$, and define $I^f\in\Ibb$ such that $I_n^f:=[f^*(n),f^*(n+1))$.
\end{definition}

In Blass \cite{blass} it is proved that $\Dbf\eqT\Dbf_1$. For completeness, we present the proof and include $\Dbf_2$.

\begin{lemma}\label{dompart}
    $\Dbf\eqT\Dbf_1\eqT\Dbf_2$. Even more, if $D\subseteq\omega^\omega$ is a dominating family of increasing functions, then $\{I^f:f\in D\}$ is $\Dbf_1$-dominating.
\end{lemma}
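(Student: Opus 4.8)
The plan is to exhibit explicit Tukey morphisms witnessing each of the four inequalities $\Dbf_1\leqT\Dbf$, $\Dbf\leqT\Dbf_1$, $\Dbf_2\leqT\Dbf_1$ and $\Dbf_1\leqT\Dbf_2$, reusing the maps $f_I$, $I^{*2}$, $f^*$ and $I^f$ of Definition \ref{DefIntRel}, and to read off the ``even more'' clause from the morphism for $\Dbf_1\leqT\Dbf$. A harmless preliminary step is to replace an arbitrary $g\in\omega^\omega$ by $\hat g(n):=\max\{n+1,g(0),\dots,g(n)\}$, which is strictly increasing, satisfies $g\leq\hat g$, dominates the identity, and has $\hat g\leq^* h$ whenever $g\leq^* h$; this lets every map below be defined first on increasing functions and then extended via $g\mapsto\hat g$, and it guarantees that $g^*$ is strictly increasing and cofinal in $\omega$. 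The combinatorial engine for the $\Dbf_2$ part is the elementary observation (O): \emph{if a block $A$ of one partition contains no whole block of a partition $B$, then $A$ meets at most two consecutive blocks of $B$} — if $A$ met three consecutive blocks of $B$, being an interval it would contain the middle one.

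For $\Dbf_1\leqT\Dbf$ I would send $I$ to the function $h_I$ with $h_I(k):=\min I_{m+2}$ when $k\in I_m$ (so that $[k,h_I(k))$ always contains the whole interval $I_{m+1}$), and send $f$ to $I^{\hat f}$. If $h_I\leq^* f$ then $h_I\leq^*\hat f$, and substituting $k=\hat f^*(n)+1$ into $\hat f^*(n+1)=\hat f(\hat f^*(n)+1)\geq h_I(\hat f^*(n)+1)$ shows that $I^{\hat f}_n$ swallows a whole $I$-interval for every large $n$, i.e.\ $I\sqsubseteq I^{\hat f}$. Running this same computation with $f$ ranging over the given dominating family $D$ of increasing functions, and picking $f\in D$ dominating $\max\{h_I,\id+1\}$, yields at once that $\{I^f:f\in D\}$ is $\Dbf_1$-dominating, which is the ``even more'' clause. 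For the converse $\Dbf\leqT\Dbf_1$ I would send $g$ to $I^{\hat g}$ and send $J$ to the function $k\mapsto\min J_{n+2}$ (for $k\in J_n$); if $I^{\hat g}\sqsubseteq J$ then for large $n$ the block $J_{n+1}$ contains some $I^{\hat g}_m$, and since $k<\min J_{n+1}\leq\hat g^*(m)$ one gets $\hat g(k)\leq\hat g(\hat g^*(m))\leq\hat g^*(m+1)\leq\min J_{n+2}$, hence $g\leq^*$ the image function.

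For both halves of $\Dbf_1\eqT\Dbf_2$ I would take $F=\id_\Ibb$ and $G(J)=J^{*2}$. The direction $\Dbf_2\leqT\Dbf_1$ asks that $I\sqsubseteq J$ imply $I\ntriangleright J^{*2}$: if some $I_n\supseteq J^{*2}_k$ with $n$ (hence $k$) large, then $J_{2k}\subseteq I_n$, while $I\sqsubseteq J$ gives an $I$-block inside $J_{2k}$, forcing $I_n=J_{2k}$ and the impossible $J_{2k+1}\subseteq J_{2k}$. The direction $\Dbf_1\leqT\Dbf_2$ asks that $I\ntriangleright J$ imply $I\sqsubseteq J^{*2}$: for large $k$ the splitting point $s=\min J_{2k+1}$ lies in some large $I_n$, which by (O) is contained in $J_{2k}\cup J_{2k+1}=J^{*2}_k$ (then $I_n\subseteq J^{*2}_k$) or in $J_{2k+1}\cup J_{2k+2}$; in the latter case $\min I_n=s$, so the previous block $I_{n-1}$ ends exactly at $s$, and since $I_{n-1}$ cannot straddle $\min J_{2k}$ without containing the whole block $J_{2k}$ — which $I\ntriangleright J$ forbids — it must satisfy $I_{n-1}\subseteq J_{2k}\subseteq J^{*2}_k$.

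I expect the only genuinely delicate point to be this last case analysis: upgrading the ``infinitely often'' that (O) supplies to the ``for all but finitely many'' demanded by $\sqsubseteq$, which is precisely what forces the doubling $J^{*2}$ and the switch to $I_{n-1}$ when the block at the splitting point protrudes to the right. Everything else — the domination inequalities and the endpoint bookkeeping — is routine once the regularization $g\mapsto\hat g$ is in place.
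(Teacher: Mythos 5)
Your proof is correct, but your decomposition of the equivalences differs from the paper's. The paper closes a single three-cycle: $\Dbf_1\leqT\Dbf$ (via $I\mapsto f_I$ and $f\mapsto I^f$, which yields the ``even more'' clause in essentially the same way you obtain it), $\Dbf_2\leqT\Dbf_1$ (the same doubling trick, $I\sqsubseteq J$ implies $I\ntriangleright J^{*2}$), and then the one genuinely delicate inequality $\Dbf\leqT\Dbf_2$, namely that $I^f\ntriangleright I$ implies $f\leq^* f_I$, which the paper handles by splitting on whether $f=\id_\omega$ and arguing that some block $I_n$ meets $\ran f$ in at least two points. You instead prove $\Dbf\leqT\Dbf_1$ and $\Dbf_1\leqT\Dbf_2$ directly, so your delicate step is the case analysis at the splitting point $\min J_{2k+1}$ with the fallback to $I_{n-1}$; this is arguably cleaner and more symmetric than the paper's argument, at the cost of verifying four inequalities instead of three. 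Two cosmetic slips in your preliminary paragraph: $\hat g$ is only non-decreasing, not strictly increasing (take $g(0)$ huge), and the claim that $g\leq^* h$ implies $\hat g\leq^* h$ is false in general (take $g=h\equiv 0$). Fortunately neither property is used anywhere in your argument --- all you actually need is $g\leq\hat g$, $\hat g\geq\id_\omega+1$, $\hat g$ non-decreasing, and that $\hat g^*$ is strictly increasing and cofinal, all of which hold --- so you should simply delete those two claims.
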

\begin{proof}
    To see $\Dbf_1\leqT\Dbf$ note that, for any $I\in\Ibb$ and $f\in\omega^\omega$, if $f\in\omega^\omega$ is increasing then $f_I\leq^* f$ implies $I\sqsubseteq I^f$. Indeed, for $n$ large enough, put $m:=f^*(n)$, so $f^*(n)=m\leq f_I(m)<f_I(m+1)\leq f(m+1)=f^*(n+1)$, that is, $I_m\subseteq I^f_n$.

    For $\Dbf_2\leqT\Dbf_1$ note that, for any $I,J\in\Ibb$, $I\sqsubseteq J$ implies $I\ntriangleright J^{*2}$. Finally, $\Dbf\leqT\Dbf_2$ because, for any increasing $f\in\omega^\omega$ and $I\in\Ibb$, $I^f\ntriangleright I$ implies $f\leq^* f_I$. To show this, notice that $I^f\ntriangleright I$ is equivalent to say that $(f_I(n),f_I(n+1))\cap\ran f^*\neq\emptyset$ for all but finitely many $n$. Split into cases: if $f=\id_\omega$, then $f^*=\id_\omega$, so $(f_I(n),f_I(n+1))\neq\emptyset$ for $n$ large enough. Hence, while $f(n+1)-f(n)=1$, eventually $f_I(n+1)-f_I(n)\geq 2$, which guarantees $f\leq^* f_I$.

    For the second case, assume $f(m_0)>m_0$ for some $m_0<\omega$.\footnote{Since $f$ is increasing, $f\geq\id_\omega$.} This implies that $f(n)>n$ for every $n\geq m_0$. To guarantee $f\leq^* f_I$, it is enough to show that $|I_n\cap \ran f|\geq 2$ for infinitely many $n$ (recall that $I_n\cap\ran f\neq\emptyset$ for large enough $n$). If $n\in\omega$ is  large enough, then there is some $m<\omega$ such that $f_I(n)<f^*(m)<f_I(n+1)$. On the other hand, since $(f_I(n+1),f_I(n+2))\cap\ran f^*\neq\emptyset$, $f^*(m+1),f(f^*(m))\in I_n\cup I_{n+1}$. This clearly implies that either $I_n$ or $I_{n+1}$ intersects $\ran f$ in 2 or more points.
\end{proof}

Fix $b:\omega\to \omega\menos\{0\}$.
Denote $\prod b:=\prod_{i<\omega}b(i)$ and $\Seq_{<\omega}(b):=\bigcup_{n<\omega}\prod_{i<n}b(i)$. For each $s\in\Seq_{<\omega}(b)$ define $[s]:=[s]_b:=\{x\in\prod b: s\subseteq x\}$.
The space $\prod b$, as a topological space endowed with the product topology where each $b(i)$ has the discrete topology, is a compact Polish space with open basis $\{[s]:s\in\Seq_{<\omega}(b)\}$. Even more, $\prod b$ is a perfect space whenever $b\nleq^*1$.

For combinatorial purposes, we use the notion of strong measure zero in $\prod b$.

\begin{definition}
For each $\sigma \in (\Seq_{<\omega}(b))^\omega$ define $\hgt_{\sigma}\in\omega^{\omega}$ by $\hgt_{\sigma}(i):=|\sigma(i)|$.

Say that $X\subseteq \prod b$ \textit{has strong measure zero} iff for every $f\in\omega^\omega$ there is some $\sigma\in (\Seq_{<\omega}(b))^\omega$ with  $\hgt_\sigma=f$ such that $X\subseteq\bigcup_{i<\omega}[\sigma(i)]$.

Denote $\SNcal(\prod b):=\{X\subseteq\prod b :X\text{\ has strong measure zero}\}$. Likewise, we use the notation $\SNcal(\R)$ and $\SNcal([0,1])$.
\end{definition}

When $b\nleq^* 1$, the map $F_b:\prod b\to[0,1]$ defined by
\[F_b(x):=\sum_{n<\omega}\frac{x(n)}{\prod_{i\leq n}b(i)}\]
is a continuous onto function, which is one-to-one on the set of sequences in $\prod b$ that are not eventually constant. This map preserves sets between $\SNcal(\prod b)$ and $\SNcal([0,1])$ via images and pre-images, therefore, the value of the cardinal invariants associated with $\SNcal$ do not depend on the space $\prod b$, neither on $\R$ or $[0,1]$.

For $\sigma \in (\Seq_{<\omega}(b))^\omega$ define
\[[\sigma]_\infty:=[\sigma]_{b,\infty}=\{x \in \prod b:\exists^{\infty}{n < \omega}^{}(\sigma(n) \subseteq x)\}=\bigcap_{n<\omega} \bigcup_{m \geqslant n}[\sigma(m)].\]

The following characterization of $\SNcal$ is quite practical in terms of the sets above.

\begin{lemma}\label{charSN}
    Let $X\subseteq\prod b$ and let $D\subseteq\omega^\omega$ be a dominating family. Then $X\in\SNcal(\prod b)$ iff for every $f\in D$ there is some $\sigma\in (\Seq_{<\omega}(b))^\omega$ with $\hgt_\sigma=f$ such that $X\subseteq[\sigma]_\infty$.
\end{lemma}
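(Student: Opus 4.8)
The plan is to prove both implications, observing that the forward direction in fact holds for every $f\in\omega^\omega$ (not merely for $f$ in a dominating family), while the backward direction is exactly where the hypothesis that $D$ is dominating enters. The observation linking the two covering notions is that $[\sigma]_\infty\subseteq\bigcup_{i<\omega}[\sigma(i)]$, so demanding $X\subseteq[\sigma]_\infty$ is formally stronger than demanding $X\subseteq\bigcup_i[\sigma(i)]$; the content of the lemma is that an ordinary strong measure zero cover can be upgraded to an ``infinitely often'' cover without altering the prescribed heights, provided one only tests against a dominating family.

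For the forward direction, suppose $X\in\SNcal(\prod b)$ and fix $f\in\omega^\omega$ (in particular, any $f\in D$). First I would fix a partition $\omega=\bigsqcup_{k<\omega}A_k$ into infinitely many infinite pieces, with increasing enumerations $e_k\colon\omega\to A_k$. For each $k$ I apply the strong measure zero property to the function $g_k:=f\circ e_k$ to obtain $\tau^k\in(\Seq_{<\omega}(b))^\omega$ with $\hgt_{\tau^k}=g_k$ and $X\subseteq\bigcup_i[\tau^k(i)]$. Then I glue these together by setting $\sigma(e_k(i)):=\tau^k(i)$, which forces $\hgt_\sigma=f$. The point is that for each fixed $k$ and each $x\in X$ there is some $i$ with $\sigma(e_k(i))=\tau^k(i)\subseteq x$, so $x$ is covered by at least one index inside every block $A_k$; since the blocks are disjoint and infinite in number, $\sigma(n)\subseteq x$ for infinitely many $n$, whence $x\in[\sigma]_\infty$.

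For the backward direction, fix an arbitrary $g\in\omega^\omega$; I must produce $\tau$ with $\hgt_\tau=g$ and $X\subseteq\bigcup_i[\tau(i)]$. Here I would use that $D$ is dominating to choose $f\in D$ with $g\leq^* f$, say $g(n)\leq f(n)$ for all $n\geq n_0$, together with the given $\sigma$ satisfying $\hgt_\sigma=f$ and $X\subseteq[\sigma]_\infty$. Define $\tau(n):=\sigma(n)\frestr g(n)$ for $n\geq n_0$ (a legitimate truncation since $g(n)\leq|\sigma(n)|$) and let $\tau(n)$ be an arbitrary sequence of length $g(n)$ for $n<n_0$. Since each $x\in X$ lies in $[\sigma]_\infty$, it satisfies $\sigma(n)\subseteq x$ for infinitely many $n$, hence for some $n\geq n_0$; for such $n$, the sequence $\tau(n)$ is a prefix of $\sigma(n)\subseteq x$, so $x\in[\tau(n)]$. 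Thus $X\subseteq\bigcup_i[\tau(i)]$ with $\hgt_\tau=g$, giving $X\in\SNcal(\prod b)$.

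I expect the only genuinely clever step to be the partition argument in the forward direction: turning points covered merely once into points covered infinitely often by spreading the prescribed heights over infinitely many disjoint infinite index blocks. The backward direction is routine once one notices that the ``infinitely often'' hypothesis is precisely what allows one to discard the finitely many indices $n<n_0$ on which $g$ might exceed $f$.
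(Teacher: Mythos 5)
Your proof is correct, and it is the standard argument that the paper implicitly relies on (the lemma is stated there without proof): the forward direction spreads the prescribed heights over infinitely many disjoint infinite index blocks to upgrade a one-time cover to an infinitely-often cover, and the backward direction truncates a $\sigma$ witnessing some dominating $f\geq^{*}g$ down to height $g$. Both steps check out, including the harmless handling of the finitely many indices where $g$ may exceed $f$.
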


Now we focus on $b=2$ (as a constant function). Denote $\pw_k:\omega\to\omega$ the function defined by $\pw_k(i):=i^k$, and define the relation $\ll$ on $\omega^\omega$ as follows:
\[f\ll g \text{\ iff } \forall{k<\omega}(f\circ\pw_k\leq^* g).\]



\begin{definition}[Yorioka {\cite{Yorioka}}]\label{DefYorio} For each $f \in \omega^\omega$ define
    \[\mathcal{I}_{f}:=\{X\subseteq 2^{\omega}:\exists{\sigma \in (2^{<\omega})^{\omega}}(X \subseteq [\sigma]_\infty\text{\ and }\hgt_{\sigma}\gg f )\}.\]
 Any family of the form $\mathcal{I}_{f}$ with $f$ increasing is called a \textit{Yorioka ideal}.
\end{definition}

Yorioka \cite{Yorioka} has proved that $\Iwf_f$ is a $\sigma$-ideal when $f$ is increasing. By Lemma \ref{charSN} it is clear that $\SNcal=\bigcap\{\Iwf_f:f\textrm{\ increasing}\}$. Denote
\[\minadd:=\min\{\add(\Iwf_f):f\text{\ increasing}\},\quad \supcof:=\sup\{\cof(\Iwf_f):f\text{\ increasing}\}.\]

It is known that $\add(\Nwf)\leq\minadd\leq\add(\Mwf)$ and $\cof(\Mwf)\leq\supcof\leq\cof(\Nwf)$ (see \cite{Osuga,CM}), even more, it is not hard to see that $\minadd\leq\add(\SNcal)$. Yorioka's characterization of $\cof(\SNcal)$ is established as follows.

\begin{theorem}[Yorioka {\cite{Yorioka}}]\label{Ycof}
   If $\minadd=\supcof=\kappa$ then $\cof(\SNcal)=\dfrak_\kappa$.\footnote{In Yorioka's original result it is further assumed that $\dfrak=\cov(\Mwf)=\kappa$, but this is now known to be redundant.}
\end{theorem}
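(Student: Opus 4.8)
\emph{Preliminary reductions.} The plan is to establish the two Tukey inequalities that together give $\la\SNcal,\subseteq\ra\eqT\Dbf_\kappa$, whence $\cof(\SNcal)=\dfrak(\Dbf_\kappa)=\dfrak_\kappa$. First I would record the consequences of the hypothesis. Since an additivity is always regular and $\minadd$ is attained, $\kappa$ is regular. For every increasing $f$ one has $\kappa=\minadd\leq\add(\Iwf_f)\leq\cof(\Iwf_f)\leq\supcof=\kappa$, so $\add(\Iwf_f)=\cof(\Iwf_f)=\kappa$. Moreover $\kappa=\minadd\leq\add(\Mwf)=\min\{\bfrak,\cov(\Mwf)\}\leq\bfrak\leq\dfrak\leq\cof(\Mwf)\leq\supcof=\kappa$, so $\bfrak=\dfrak=\cov(\Mwf)=\kappa$; this is exactly why Yorioka's extra hypotheses $\dfrak=\cov(\Mwf)=\kappa$ are redundant. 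The $\ll$-cofinality of the increasing functions equals $\dfrak=\kappa$ (given $g$, its diagonal $n\mapsto g(n^n)$ is a $\ll$-bound, so any $\le^*$-dominating family is $\ll$-cofinal), and by $\bfrak=\kappa$ any fewer than $\kappa$ of them have a common $\ll$-upper bound; hence I can fix a $\ll$-increasing, $\ll$-cofinal sequence $\la f_\xi:\xi<\kappa\ra$. As $f\ll g$ implies $\Iwf_g\subseteq\Iwf_f$, this gives $\SNcal=\bigcap_{\xi<\kappa}\Iwf_{f_\xi}$ and $\Iwf_{f_{\xi'}}\subseteq\Iwf_{f_\xi}$ whenever $\xi\le\xi'$. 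Finally, using $\add(\Iwf_{f_\xi})=\cof(\Iwf_{f_\xi})=\kappa$ and regularity, fix for each $\xi$ a $\subseteq$-increasing cofinal sequence $\la A^\xi_\eta:\eta<\kappa\ra$ in $\Iwf_{f_\xi}$.

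\emph{Upper bound.} To see $\cof(\SNcal)\le\dfrak_\kappa$ I would define, for $X\in\SNcal$, the covering-level function $F(X)\in\kappa^\kappa$ by letting $F(X)(\xi)$ be the least $\eta$ with $X\subseteq A^\xi_\eta$ (defined since $X\in\Iwf_{f_\xi}$). For $\psi\in\kappa^\kappa$ and $i<\kappa$ put $G(\psi,i):=\bigcap_{\xi\ge i}A^\xi_{\psi(\xi)}$. This set lies in $\SNcal$: for $\xi\ge i$ it is contained in $A^\xi_{\psi(\xi)}\in\Iwf_{f_\xi}$, while for $\xi<i$ it is contained in $A^i_{\psi(i)}\in\Iwf_{f_i}\subseteq\Iwf_{f_\xi}$, so it belongs to every $\Iwf_{f_\xi}$. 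Now fix a $\le^*$-dominating family $\{\psi_\zeta:\zeta<\dfrak_\kappa\}$ in $\kappa^\kappa$. Given $X\in\SNcal$, pick $\zeta$ and $i$ with $F(X)(\xi)\le\psi_\zeta(\xi)$ for all $\xi\ge i$; then $X\subseteq\bigcap_{\xi\ge i}A^\xi_{\psi_\zeta(\xi)}=G(\psi_\zeta,i)$. Hence $\{G(\psi_\zeta,i):\zeta<\dfrak_\kappa,\ i<\kappa\}$ is cofinal in $\la\SNcal,\subseteq\ra$, and since $\kappa<\dfrak_\kappa$ its size is $\dfrak_\kappa$.

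\emph{Lower bound.} For $\dfrak_\kappa\le\cof(\SNcal)$ I would argue by contradiction. Suppose $\Cwf\subseteq\SNcal$ is cofinal with $|\Cwf|<\dfrak_\kappa$, and for $Y\in\Cwf$ let $h_Y(\xi):=\min\{\eta:Y\subseteq A^\xi_\eta\}$. As $|\Cwf|<\dfrak_\kappa$, the family $\{h_Y:Y\in\Cwf\}$ is not dominating in $\Dbf_\kappa$, so there is $x\in\kappa^\kappa$ such that for every $Y\in\Cwf$ the set $\{\xi:x(\xi)>h_Y(\xi)\}$ is unbounded in $\kappa$. Writing $A^\xi_{<x(\xi)}:=\bigcup_{\eta<x(\xi)}A^\xi_\eta\in\Iwf_{f_\xi}$ (using $\add(\Iwf_{f_\xi})=\kappa$), the inequality $x(\xi)>h_Y(\xi)$ says exactly $Y\subseteq A^\xi_{<x(\xi)}$. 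Pick $\sigma^\xi$ with $A^\xi_{<x(\xi)}\subseteq[\sigma^\xi]_\infty$ and $\hgt_{\sigma^\xi}\gg f_\xi$. The proof is then finished by the key construction: there is $T\in\SNcal$ with $T\not\subseteq[\sigma^\xi]_\infty$ for all but boundedly many $\xi$. Indeed, by cofinality choose $Y\in\Cwf$ with $T\subseteq Y$; then $Y\not\subseteq A^\xi_{<x(\xi)}$ for all but boundedly many $\xi$, contradicting that $\{\xi:Y\subseteq A^\xi_{<x(\xi)}\}$ is unbounded.

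\emph{The main obstacle.} The hard part is precisely this last construction: producing a single strong measure zero set $T$ that escapes cofinally many of the prescribed sets $[\sigma^\xi]_\infty$. Single points do not suffice, since $\cov(\Iwf_{f_\xi})\le\cof(\Iwf_{f_\xi})=\kappa$ permits $\kappa$ of the sets $[\sigma^\xi]_\infty$ to cover the whole space. Instead I would build $T$ recursively, using Lemma \ref{charSN}: to guarantee $T\in\SNcal$ it is enough to provide, for each $\xi$ in the dominating family $\la f_\xi\ra$, a cover $\tau^\xi$ with $\hgt_{\tau^\xi}=f_\xi$ and $T\subseteq[\tau^\xi]_\infty$. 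The point is to interleave the choice of these covers with the requirement $T\not\subseteq[\sigma^\xi]_\infty$, exploiting that $\hgt_{\sigma^\xi}\gg f_\xi$ (so $\sigma^\xi$ is much finer than the cover forced at level $f_\xi$) in order to keep, at each stage, a point captured by all covers built so far yet avoiding $[\sigma^\xi]_\infty$. Here the equalities $\cov(\Mwf)=\dfrak=\kappa$ enter decisively: the former ensures that the sets of size $<\kappa$ produced along the recursion remain strong measure zero, and the latter caps the length of the recursion at $\kappa$. Carrying out this diagonalization cleanly is the technical core; once it is in place, the two Tukey inequalities above yield $\la\SNcal,\subseteq\ra\eqT\Dbf_\kappa$ and hence $\cof(\SNcal)=\dfrak_\kappa$.
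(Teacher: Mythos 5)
A preliminary remark: the paper does not prove Theorem \ref{Ycof} at all --- it quotes it from Yorioka's work and uses it as a black box --- so there is no in-paper argument to compare yours against; I can only judge the proposal on its own terms. Your preliminary reductions are correct (in particular the observation that $\kappa=\minadd\leq\add(\Mwf)\leq\bfrak\leq\dfrak\leq\cof(\Mwf)\leq\supcof=\kappa$ is exactly why the extra hypotheses $\dfrak=\cov(\Mwf)=\kappa$ are redundant), the existence of a $\ll$-increasing $\ll$-cofinal sequence $\la f_\xi:\xi<\kappa\ra$ follows as you say from $\bfrak=\dfrak=\kappa$, and your proof of the upper bound $\cof(\SNcal)\leq\dfrak_\kappa$ via the sets $G(\psi,i)=\bigcap_{\xi\geq i}A^\xi_{\psi(\xi)}$ is complete and correct. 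The Tukey framing $\la\SNcal,\subseteq\ra\eqT\Dbf_\kappa$ is indeed the right skeleton.

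The lower bound, however, has a genuine gap, and it sits exactly where you flag it. Your reduction is fine: it suffices to produce, for the given $x\in\kappa^\kappa$ and the sets $[\sigma^\xi]_\infty\supseteq A^\xi_{<x(\xi)}$ with $\hgt_{\sigma^\xi}\gg f_\xi$, a single $T\in\SNcal$ with $T\not\subseteq[\sigma^\xi]_\infty$ for all but boundedly many $\xi$. But this existence statement \emph{is} the mathematical content of Yorioka's theorem, and the paragraph you offer in its place is a description of a hoped-for diagonalization, not a proof. Two concrete problems are left unresolved. First, the recursion must run for $\kappa$ stages (as you note, single points or even sets of size $<\kappa$ cannot work, since each point may lie in unboundedly many of the $[\sigma^\xi]_\infty$), so the final $T$ has size $\kappa=\cov(\Mwf)$ and its membership in $\SNcal$ is not automatic from any smallness argument; it must be certified by exhibiting, for every $\xi$, a cover $\tau^\xi$ with $\hgt_{\tau^\xi}=f_\xi$ catching \emph{all} of $T$, including the points added at stages $>\xi$. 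Second, these covers therefore cannot be chosen stage by stage in the naive way: one needs a simultaneous fusion-type bookkeeping in which the points of $T$ and the covers $\tau^\xi$ are built coherently, exploiting $\hgt_{\sigma^\xi}\gg f_\xi$ quantitatively, and nothing in your sketch explains how the points added at stage $\xi'$ are kept inside $[\tau^\xi]_\infty$ for all $\xi<\xi'$ while still escaping $[\sigma^{\xi'}]_\infty$. Until this construction is carried out, the inequality $\dfrak_\kappa\leq\cof(\SNcal)$ is not established, so the proposal proves only half of the theorem.
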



To finish this section, we review how to increase $\dfrak_\kappa$ with $\kappa$-Cohen reals. Denote by $\Fn_{<\kappa}(I,J)$ the poset of partial functions from $I$ into $J$ with domain of size $<\kappa$, ordered by $\supseteq$.

\begin{lemma}\label{kappaCohen}
    Let $\kappa$ and $\lambda$ be infinite cardinals. If $\lambda>\kappa^{<\kappa}$ then $\Fn_{<\kappa}(\lambda\times\kappa,\kappa)$ forces $\dfrak_\kappa\geq\lambda$.
\end{lemma}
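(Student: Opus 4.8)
The plan is to exploit the product structure of $\Por:=\Fn_{<\kappa}(\lambda\times\kappa,\kappa)$ together with a counting of supports; throughout I assume $\kappa$ regular, as in the rest of this section. First I would record two structural facts about $\Por$. Since $\kappa$ is regular, every condition has domain of size $<\kappa$, hence bounded in each column, and the union of a decreasing $<\kappa$-sequence of conditions is again a condition, so $\Por$ is $<\kappa$-closed. Second, a generalized $\Delta$-system argument shows $\Por$ is $(\kappa^{<\kappa})^+$-cc: given a putative antichain of size $(\kappa^{<\kappa})^+$, the $\Delta$-system lemma applies because $(\kappa^{<\kappa})^+$ is regular, exceeds $\kappa$, and $\alpha^{<\kappa}\leq(\kappa^{<\kappa})^{<\kappa}=\kappa^{<\kappa}<(\kappa^{<\kappa})^+$ for every $\alpha<(\kappa^{<\kappa})^+$; it yields a subfamily of full size with common root $r$, and since there are only $\kappa^{|r|}\leq\kappa^{<\kappa}$ possibilities for the restriction to $r$, two of the conditions agree on $r$ and are therefore compatible, a contradiction.

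Using these facts I would bound the support of any $\Por$-name for an element of $\kappa^\kappa$. For such a name $\dot d$, choose for each $i<\kappa$ a maximal antichain $A_i$ whose conditions decide $\dot d(i)$; by the chain condition $|A_i|\leq\kappa^{<\kappa}$, so the support $\mathrm{supp}(\dot d):=\bigcup_{i<\kappa}\bigcup_{p\in A_i}\dom(p)$ has size at most $\kappa\cdot\kappa^{<\kappa}\cdot\kappa=\kappa^{<\kappa}<\lambda$, and $\dot d$ may be taken to depend only on the coordinates in $\mathrm{supp}(\dot d)$. Now suppose toward a contradiction that $\Por$ forces some $\dot D=\{\dot d_\xi:\xi<\mu\}$ with $\mu<\lambda$ to be dominating. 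The set $S:=\bigcup_{\xi<\mu}\mathrm{supp}(\dot d_\xi)$ has size at most $\mu\cdot\kappa^{<\kappa}<\lambda$ (the product of two cardinals below $\lambda$ stays below $\lambda$), so its projection to the first coordinate is a proper subset of $\lambda$; fix $\alpha^*<\lambda$ with $(\{\alpha^*\}\times\kappa)\cap S=\varnothing$.

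Finally I would derive the contradiction by mutual genericity. Factor $\Por\cong\Por_{\neq\alpha^*}\times\Por_{\alpha^*}$, where $\Por_{\alpha^*}=\Fn_{<\kappa}(\{\alpha^*\}\times\kappa,\kappa)$ and $\Por_{\neq\alpha^*}$ collects the remaining columns. Since each $\dot d_\xi$ has support disjoint from column $\alpha^*$, every $d_\xi$ lies in $V[G_{\neq\alpha^*}]$, while the generic $c_{\alpha^*}\in\kappa^\kappa$ read off column $\alpha^*$ is $\Por_{\alpha^*}$-generic over $V[G_{\neq\alpha^*}]$. A density argument then shows $c_{\alpha^*}$ is dominated by no function of $V[G_{\neq\alpha^*}]$: given $d\in\kappa^\kappa\cap V[G_{\neq\alpha^*}]$, a condition, and $i_0<\kappa$, the domain of the condition is bounded in $\kappa$ by regularity, so one may extend it by setting the value at some $i\geq i_0$ above $d(i)$; hence $\{i<\kappa:c_{\alpha^*}(i)>d(i)\}$ is unbounded and $c_{\alpha^*}\not\leq^* d$. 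Thus no $d_\xi$ dominates $c_{\alpha^*}$, contradicting that $\dot D$ is dominating, and therefore $\Por$ forces $\dfrak_\kappa\geq\lambda$. I expect the main obstacle to be the chain-condition computation: the whole support count, and with it the existence of a free column $\alpha^*$, rests on the bound $(\kappa^{<\kappa})^+$ for antichains, which is precisely where the hypothesis $\lambda>\kappa^{<\kappa}$ is consumed.
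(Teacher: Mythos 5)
Your proposal is correct and follows essentially the same route as the paper: use the $(\kappa^{<\kappa})^+$-cc to confine the $\mu<\lambda$ many names to fewer than $\lambda$ columns, pick a free column, and argue by (mutual) genericity that the $\kappa$-Cohen function there is undominated. You supply the details the paper leaves implicit (the $\Delta$-system computation, the support count, the product factorization), and your aside about regularity in the density step is harmless since the complement of a domain of size $<\kappa$ is unbounded in $\kappa$ in any case.
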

\begin{proof}
   Let $\gamma<\lambda$ and let $\{\dot{y}_\alpha:\alpha<\gamma\}$ be a set of $\Fn_{<\kappa}(\lambda\times\kappa,\kappa)$-names of functions in $\kappa^\kappa$. Since this poset is $(\kappa^{<\kappa})^+$-cc, there is some $S\in[\lambda]^{<\lambda}$ such that each $\dot{y}_\alpha$ is a $\Fn_{<\kappa}(S\times\kappa,\kappa)$-name. A genericity argument guarantees that $\Fn_{<\kappa}(\kappa,\kappa)$ adds an unbounded function in $\kappa^\kappa$ over the ground model, so $\Fn_{<\kappa}(\lambda\times\kappa,\kappa)$ forces that the $\kappa$-Cohen real at $\xi\in\lambda\menos S$ is not dominated by any $\dot{y}_\alpha$.
\end{proof}

\section{Preservation}\label{SecPres}

In this section, we show a method to preserve $\dfrak_\kappa$ large for $\kappa$ regular. This is a natural generalization of preservation methods by Judah and Shelah \cite{JS} and Brendle \cite{Br}. Our presentation is closer to \cite[Sect. 4]{CM}.

\begin{definition}
   Let $\kappa$ be an infinite cardinal. Say that a poset is \emph{$\kappa^\kappa$-good} if, for any $\Por$-name of a function in $\kappa^\kappa$, there is some $h\in\kappa^\kappa$ (in the ground model) such that, for any $x\in\kappa^\kappa$, if $x\nleq^* h$ then $\Vdash x\nleq^*\dot{y}$.
\end{definition}

\begin{lemma}\label{dombig}
    Any $\kappa^\kappa$-good poset forces that $\dfrak_\kappa\geq|\dfrak_\kappa^V|$.
\end{lemma}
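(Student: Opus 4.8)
The plan is to reduce the lemma to a single technical claim about ground-model-indexed sequences of names, and then transfer that claim to an arbitrary dominating family of the extension by a reindexing argument.

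First I would isolate the core: \emph{if $\gamma<\dfrak_\kappa^V$ is an ordinal and $\la\dot y_\alpha:\alpha<\gamma\ra$ is any $V$-sequence of $\Por$-names for elements of $\kappa^\kappa$, then there is a single $x\in\kappa^\kappa\cap V$ with $\Vdash x\nleq^*\dot y_\alpha$ for every $\alpha<\gamma$.} To prove this, for each $\alpha$ I apply $\kappa^\kappa$-goodness to $\dot y_\alpha$ to obtain a witness $h_\alpha\in\kappa^\kappa\cap V$; using the axiom of choice \emph{in $V$} (legitimate because the sequence of names and all candidate witnesses are $V$-objects) I collect these into a sequence $\la h_\alpha:\alpha<\gamma\ra\in V$. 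Since $|\{h_\alpha:\alpha<\gamma\}|^V\le|\gamma|<\dfrak_\kappa^V$ and $\dfrak_\kappa^V$ is by definition the least size of a dominating family in $V$, this family is not dominating, so there is $x\in\kappa^\kappa\cap V$ with $x\nleq^* h_\alpha$ for all $\alpha$. By the definition of goodness, $x\nleq^* h_\alpha$ yields $\Vdash x\nleq^*\dot y_\alpha$, simultaneously for every $\alpha$, which proves the claim. In particular $\Por$ forces that $\{\dot y_\alpha:\alpha<\gamma\}$ fails to dominate $\check x$, hence is not dominating.

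Then I would transfer this to the lemma. Suppose toward a contradiction that some condition $p$ forces $\dfrak_\kappa<|\dfrak_\kappa^V|$; strengthening $p$, fix $q\le p$, a name $\dot E$, and a name $\dot\mu$ such that $q$ forces $\dot E$ to enumerate a dominating family of size $\dot\mu$, and fix an ordinal $\mu_0$ with $q\Vdash\dot\mu=\check\mu_0$. Writing $\delta:=\dfrak_\kappa^V$, the elementary fact that the cardinality of an ordinal never exceeds the ordinal gives $\Vdash|\check\delta|\le\check\delta$; hence $q\Vdash\check\mu_0<|\check\delta|$ forces $q\Vdash\check\mu_0<\check\delta$, so $\mu_0<\delta$ already holds in $V$. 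Now I define in $V$ the canonical names $\dot y_\xi$ for $\dot E(\check\xi)$ with $\xi<\mu_0$, obtaining a $V$-sequence of names indexed by an ordinal below $\dfrak_\kappa^V$. The core claim produces $x\in\kappa^\kappa\cap V$ with $\Vdash x\nleq^*\dot y_\xi$ for all $\xi<\mu_0$, so below $q$ the family $\{\dot y_\xi:\xi<\mu_0\}$ cannot be dominating, contradicting the choice of $q$.

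The step I expect to be most delicate is this final transfer, where the index set of the extension's dominating family is a cardinal \emph{of the extension} rather than a ground-model ordinal below $\dfrak_\kappa^V$. The resolution is exactly the observation that $|\dfrak_\kappa^V|\le\dfrak_\kappa^V$ in the extension, which converts the extension-cardinality bound on the size of the family into an honest ground-model ordinal bound $\mu_0<\delta$, after which the $V$-indexed core claim applies verbatim. The remaining ingredients — the appeal to $\mathrm{AC}$ in $V$ to assemble the witnesses into a single sequence, and the fact that a family of $V$-size below $\dfrak_\kappa^V$ is not dominating in $V$ — are routine.
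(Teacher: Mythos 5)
Your proof is correct and its core is exactly the paper's argument: apply goodness to each name $\dot y_\alpha$ to get witnesses $h_\alpha$, use $\gamma<\dfrak_\kappa^V$ to find $x$ not dominated by any $h_\alpha$, and conclude $\Vdash x\nleq^*\dot y_\alpha$ for all $\alpha$. The only difference is that you spell out the routine transfer from ``no family of names indexed by an ordinal below $\dfrak_\kappa^V$ is forced to be dominating'' to the stated inequality $\dfrak_\kappa\geq|\dfrak_\kappa^V|$, which the paper leaves implicit.
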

\begin{proof}
   Assume that $\Por$ is a $\kappa^\kappa$-good poset and that $\lambda=\dfrak^V_\kappa$. Let $\gamma<\lambda$ and assume that $\{\dot{y}_\alpha:\alpha<\gamma\}$ is a set of $\Por$-names of functions in $\kappa^\kappa$. For each $\alpha<\gamma$ there is some $h_\alpha\in\kappa^\kappa$ satisfying goodness for $\dot{y}_\alpha$. Since $\gamma<\lambda$, there is some $x\in\kappa^\kappa$ such that $x\nleq^* h_\alpha$ for any $\alpha<\gamma$. Therefore, by goodness, $\Por$ forces that $x\nleq^*\dot{y}_\alpha$.
\end{proof}

The following couple of lemmas illustrate simple examples of $\kappa^\kappa$-good posets.

\begin{lemma}[cf. {\cite[Lemma 1.46]{montoya}}]\label{smallgood}
   If $\kappa$ is regular then any poset of size $\leq\kappa$ is $\kappa^\kappa$-good.
\end{lemma}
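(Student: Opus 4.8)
The plan is to establish the \emph{contrapositive} form of goodness: I will produce a single $h\in\kappa^\kappa$ such that whenever some condition $p$ forces $x\leq^*\dot{y}$, then already $x\leq^* h$ holds in the ground model. This suffices, because $\Vdash x\nleq^*\dot{y}$ fails exactly when some $p\in\Por$ forces $x\leq^*\dot{y}$; so if $x\nleq^* h$ then no such $p$ exists and $\Vdash x\nleq^*\dot{y}$.

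First I would attach to each condition a ground-model function recording the least possible value of $\dot{y}$ below it. Fix an enumeration $\Por=\{q_\xi:\xi<\kappa\}$ (repeating entries if $|\Por|<\kappa$), and for each $\xi<\kappa$ and each $i<\kappa$ set
\[ g_\xi(i):=\min\{\beta<\kappa:\exists q'\leq q_\xi,\ q'\Vdash \dot{y}(i)=\beta\}. \]
This is well defined and $<\kappa$, so $g_\xi\in\kappa^\kappa$. The one elementary observation driving everything is: if $q_\xi\Vdash x(i)\leq\dot{y}(i)$, then $x(i)\leq g_\xi(i)$, since the witness $q'\leq q_\xi$ forcing $\dot{y}(i)=g_\xi(i)$ also forces $x(i)\leq\dot{y}(i)=g_\xi(i)$, and both sides are ground-model ordinals.

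Next I would diagonalize the (at most $\kappa$ many) functions $g_\xi$ into a single $h$, making $q_\xi$ responsible only for coordinates past $\xi$: put $h(i):=\sup\{g_\xi(i)+1:\xi<i\}$. Here regularity of $\kappa$ is used twice. The indexing set $\{\xi<i\}$ has size $<\kappa$, so $h(i)$ is a supremum of $<\kappa$ many ordinals below $\kappa$ and hence $h(i)<\kappa$, giving $h\in\kappa^\kappa$; and this same diagonal guarantees $g_\xi(i)<h(i)$ for all $i>\xi$, that is, $g_\xi\leq^* h$ for every $\xi$. To verify that $h$ works, suppose $p\Vdash x\leq^*\dot{y}$. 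Since $p$ forces the existential $\exists i_0\,\forall i\geq i_0\,(x(i)\leq\dot{y}(i))$, I can find $q\leq p$ and $n<\kappa$ with $q\Vdash\forall i\geq n\,(x(i)\leq\dot{y}(i))$; write $q=q_{\xi_0}$. Then for every $i\geq\max(n,\xi_0+1)$ the boxed observation gives $x(i)\leq g_{\xi_0}(i)<h(i)$, the last inequality because $\xi_0<i$. Hence $x\leq^* h$, as required.

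I expect the main obstacle to be conceptual rather than computational: the values of $\dot{y}(i)$ below a fixed condition may be cofinal in $\kappa$, so $\dot{y}$ cannot be bounded coordinatewise by a ground-model function, and the tail on which $x\leq\dot{y}$ depends on the generic and is not uniform across conditions. The resolution is exactly the two ideas above: it is enough to capture a \emph{single} condition $q\leq p$ pinning down one tail threshold $n$, and the diagonal $\xi<i$ lets each condition govern only its cofinite block of coordinates, both steps resting on the regularity of $\kappa$.
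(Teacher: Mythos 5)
Your proposal is correct and follows essentially the same route as the paper: for each condition you record a possible value of $\dot{y}(i)$ below it and then dominate the resulting $\leq\kappa$ ground-model functions by a single $h$, which is exactly the paper's argument (the paper cites $\kappa<\bfrak_\kappa$ for the domination step, while you carry out the diagonalization $h(i)=\sup\{g_\xi(i)+1:\xi<i\}$ explicitly using regularity). The verification via a condition $q\leq p$ deciding a tail threshold is likewise the intended one.
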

\begin{proof}
   Let $\Por$ be a poset of size $\leq\kappa$ and assume that $\dot{y}$ is a $\Por$-name of a function in $\kappa^\kappa$. For each $p\in\Por$ and $\xi<\kappa$ it is clear that there is some $h_p(\xi)<\kappa$ such that $p\nVdash\dot{y}(\xi)\neq h_p(\xi)$. Since $|\Por|\leq\kappa<\bfrak_\kappa$, there is some $h\in\kappa^\kappa$ such that $h_p\leq^* h$ for any $p\in\Por$. It is not hard to see that $x\nleq^* h$ implies $\Vdash x\nleq^*\dot{y}$.
\end{proof}

\begin{lemma}
    If $\kappa$ is regular then any $\kappa$-cc poset is $\kappa^\kappa$-good.
\end{lemma}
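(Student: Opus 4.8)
The plan is to show that any $\kappa$-cc poset $\Por$ is $\kappa^\kappa$-good directly from the definition of goodness, by producing, for each $\Por$-name $\dot{y}$ of a function in $\kappa^\kappa$, a single ground-model function $h\in\kappa^\kappa$ that dominates all the ``possible values'' of $\dot{y}$ in an appropriate sense. The key mechanism is that $\kappa$-cc bounds the number of conditions that can decide $\dot{y}(\xi)$ at each coordinate $\xi<\kappa$, so the collection of possible values at each coordinate is small enough to admit a ground-model bound.

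First I would fix $\dot{y}$ and work coordinate by coordinate. For each $\xi<\kappa$, let $A_\xi$ be a maximal antichain of conditions deciding the value of $\dot{y}(\xi)$; by $\kappa$-cc we have $|A_\xi|<\kappa$. Let $S_\xi:=\{\beta<\kappa : \exists p\in A_\xi\ (p\Vdash\dot{y}(\xi)=\beta)\}$ be the set of values that $\dot{y}(\xi)$ can take, so $|S_\xi|=|A_\xi|<\kappa$. Then define $h(\xi):=\sup S_\xi<\kappa$, using regularity of $\kappa$ to guarantee that this supremum of fewer than $\kappa$ ordinals below $\kappa$ stays below $\kappa$. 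By construction, $\Vdash\dot{y}(\xi)\leq h(\xi)$ for every $\xi<\kappa$, hence $\Vdash\dot{y}\leq h$ (everywhere, not merely $\leq^*$).

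Now I would verify goodness. Suppose $x\in\kappa^\kappa$ satisfies $x\nleq^* h$, meaning $\{\xi<\kappa : x(\xi)>h(\xi)\}$ is unbounded (equivalently, cofinal) in $\kappa$. Since $\Por$ forces $\dot{y}\leq h$ everywhere, it forces $h\leq^*\dot{y}$ to fail wherever $x$ beats $h$; more precisely, for every $\xi$ with $x(\xi)>h(\xi)$ we have $\Vdash x(\xi)>h(\xi)\geq\dot{y}(\xi)$, so $\Por$ forces that $\{\xi : x(\xi)>\dot{y}(\xi)\}$ contains the unbounded set $\{\xi:x(\xi)>h(\xi)\}$ and is therefore unbounded. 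This is exactly $\Vdash x\nleq^*\dot{y}$, as required by the definition of $\kappa^\kappa$-good.

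The main obstacle is ensuring the supremum $h(\xi)=\sup S_\xi$ stays strictly below $\kappa$, which is precisely where regularity of $\kappa$ enters: the antichain $A_\xi$ has size $<\kappa$ by $\kappa$-cc, so $S_\xi$ has size $<\kappa$, and a set of fewer than $\kappa$ ordinals below a regular $\kappa$ is bounded. Everything else is a routine unpacking of the forcing relation; no fusion or elaborate combinatorics is needed. I would remark that this lemma subsumes Lemma \ref{smallgood} for regular $\kappa$, since a poset of size $\leq\kappa$ is trivially $\kappa^+$-cc and, when $\kappa^{<\kappa}=\kappa$, one can arrange $\kappa$-cc directly, though the size argument there is cleaner via $|\Por|\leq\kappa<\bfrak_\kappa$.
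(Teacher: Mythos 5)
Your proof is correct and takes essentially the same route as the paper: both arguments reduce to the fact that, by $\kappa$-cc together with regularity of $\kappa$, every $\Por$-name for an ordinal below $\kappa$ is forced below a fixed ground-model ordinal, and then conclude $\Vdash\dot{y}\leq h$ pointwise (you establish the bound via a maximal antichain of deciding conditions, the paper via a contradiction argument with an unbounded family of conditions; these are interchangeable). One caveat about your closing aside: this lemma does \emph{not} subsume Lemma \ref{smallgood}, since a poset of size $\kappa$ need not be $\kappa$-cc (an antichain of size $\kappa$ is itself such a poset), which is why the paper handles that case separately via $|\Por|\leq\kappa<\bfrak_\kappa$.
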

\begin{proof}
   Let $\Por$ be a $\kappa$-cc poset and let $\dot{y}$ be a $\Por$-name of a function in $\kappa^\kappa$.

   \begin{claim}
       If $\dot{\alpha}$ is a $\Por$-name of a member of $\kappa$ then there is some $\beta\in\kappa$ such that $\Vdash \dot{\alpha}<\beta$.
   \end{claim}
   \begin{proof}
      Assume the contrary, that is, for any $\beta<\kappa$ there is some $p_\beta\in\Por$ such that $p_\beta\Vdash\beta\leq\dot{\alpha}$. Since $\Por$ is $\kappa$-cc and $\kappa$ is regular, there is some $q\in\Por$ forcing $|\{\beta<\kappa:p_\beta\in\dot{G}\}|=\kappa$, which implies that $q\Vdash\kappa\leq\dot{\alpha}$, a contradiction.
   \end{proof}

   For each $\xi<\kappa$, apply the claim to find some $h(\xi)\in\kappa$ such that $\Vdash \dot{y}(\xi)<h(\xi)$. It is clear that $\Vdash\dot{y}< h$, therefore, $x\nleq^* h$ implies $\Vdash x\nleq\dot{y}$.
\end{proof}

Montoya \cite[Sect. 1.2.2]{montoya} defines a canonical forcing $\Eor_\kappa$ that adds a function in $\kappa^\kappa$ eventually different from the ground model functions in $\kappa^\kappa$, and she proves that $\Eor_\kappa$ is $\kappa^\kappa$-good whenever $\Eor_\kappa$ forces that $\kappa$ is measurable.

We finish this section with the following iteration result.

\begin{lemma}\label{goodit}
    Assume that $\delta$ is a limit ordinal and that $\la\Por_\xi:\xi<\delta\ra$ is a $\lessdot$-increasing sequence of $\kappa^\kappa$-good posets. Let $\Por:=\limdir_{\xi<\delta}\Por_\xi$. If $\cf(\delta)>\kappa$ and $\Por$ is $\cf(\delta)$-cc then $\Por$ is $\kappa^\kappa$-good.
\end{lemma}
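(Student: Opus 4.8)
The plan is to reflect the name down to one of the intermediate posets and then invoke goodness there. Fix a $\Por$-name $\dot{y}$ of a function in $\kappa^\kappa$. I would argue that, because $\Por$ is $\cf(\delta)$-cc and $\cf(\delta)>\kappa$, the name $\dot y$ is (equivalent to) a $\Por_\eta$-name for some single $\eta<\delta$; goodness of $\Por_\eta$ then supplies a witness $h\in\kappa^\kappa$, and since $\Por_\eta\lessdot\Por$ this $h$ continues to witness goodness of $\dot y$ as a $\Por$-name.

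The combinatorial heart is the reflection step. For each coordinate $\xi<\kappa$, choose a maximal antichain $A_\xi\subseteq\Por$ each of whose members decides the value $\dot y(\xi)\in\kappa$. Since $\Por$ is $\cf(\delta)$-cc, $|A_\xi|<\cf(\delta)$. Because $\Por=\limdir_{\zeta<\delta}\Por_\zeta$ is a direct limit, every condition of $A_\xi$ already lies in some $\Por_\zeta$ with $\zeta<\delta$; as $A_\xi$ has fewer than $\cf(\delta)$ members, the supremum of these indices is some $\eta_\xi<\delta$, so $A_\xi\subseteq\Por_{\eta_\xi}$. Now $\{\eta_\xi:\xi<\kappa\}$ is a family of $\kappa$ many ordinals below $\delta$, and $\cf(\delta)>\kappa$ forces $\eta^\ast:=\sup_{\xi<\kappa}\eta_\xi<\delta$. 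Hence all the antichains $A_\xi$ lie in $\Por_{\eta^\ast}$, and reading $\dot y$ off these antichains produces a $\Por_{\eta^\ast}$-name $\dot y'$ with $\Vdash_\Por\dot y=\dot y'$. Here both hypotheses are used essentially: the $\cf(\delta)$-cc keeps each antichain short enough to bound inside some $\Por_{\eta_\xi}$, and $\cf(\delta)>\kappa$ keeps the $\kappa$ many bounds jointly below $\delta$.

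Finally, apply the $\kappa^\kappa$-goodness of $\Por_{\eta^\ast}$ to the name $\dot y'$: there is $h\in\kappa^\kappa$ in the ground model such that for every $x\in\kappa^\kappa$, if $x\nleq^\ast h$ then $\Vdash_{\Por_{\eta^\ast}}x\nleq^\ast\dot y'$. To transfer this up, note that $\Por_{\eta^\ast}\lessdot\Por$, so any $\Por$-generic $G$ induces a $\Por_{\eta^\ast}$-generic $G\cap\Por_{\eta^\ast}$, and the evaluation of $\dot y$ depends only on this part since $\dot y=\dot y'$; thus $x\nleq^\ast\dot y'$ holding in every $\Por_{\eta^\ast}$-extension yields $x\nleq^\ast\dot y$ in every $\Por$-extension, i.e. $\Vdash_\Por x\nleq^\ast\dot y$. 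Therefore $h$ witnesses goodness of $\dot y$, and $\Por$ is $\kappa^\kappa$-good. I expect the main obstacle to be making the reflection precise, namely verifying that capturing each $\dot y(\xi)$ by an antichain inside $\Por_{\eta^\ast}$ genuinely yields a $\Por_{\eta^\ast}$-name equivalent to $\dot y$, and confirming that the goodness witness $h$ survives the passage through the complete embedding rather than requiring a fresh witness for $\Por$.
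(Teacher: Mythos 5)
Your proposal is correct and follows the same route as the paper: reflect $\dot y$ to a $\Por_\eta$-name for some $\eta<\delta$ using the $\cf(\delta)$-cc together with $\cf(\delta)>\kappa$, apply goodness of $\Por_\eta$, and transfer the witness $h$ up through the complete embedding. The paper states the reflection step without proof; your antichain argument (bounding each $A_\xi$ in some $\Por_{\eta_\xi}$ and then taking the supremum of the $\kappa$ many indices) is exactly the standard justification, and the final absoluteness of $x\nleq^* \dot y'$ between the intermediate and full extensions is as routine as you suggest.
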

\begin{proof}
   If $\dot{y}$ is a $\Por$-name of a function in $\kappa^\kappa$, then there is some $\alpha<\delta$ such that $\dot{y}$ is a $\Por_\alpha$-name, this because $\Por$ is $\cf(\delta)$-cc and $\cf(\delta)>\kappa$. Let $h\in\kappa^\kappa$ be a function obtained from the goodness of $\Por_\alpha$ applied to $\dot{y}$. It is clear that $x\nleq^* h$ implies $\Vdash_\Por x\nleq\dot{y}$.
\end{proof}

\section{Tree forcings and the main results}\label{SecMain}

We review the following notation about trees. Say that $T\subseteq\omega^{<\omega}$ is a \emph{tree} if $\la\ \ra\in T$ and $\forall t\in T\forall s\subseteq t(s\in T)$. Denote by $\Lv_n(T):=T\cap\omega^\omega$ the \emph{$n$-th level of $T$} and, for any $s\in T$, let $T^s:=\{t\in T:s\subseteq t\text{\ or }t\subseteq t\}$, which is also a tree. Denote by $[T]=:\{x\in\omega^\omega:\forall n<\omega(x{\upharpoonright}n\in T)\}$ the set of infinite branches of $T$.

Let $T\subseteq\omega^{<\omega}$ be a tree. Say that $s\in T$ is a \emph{splitting node of $T$} if $s^\frown\la i\ra,s^\frown\la j\ra\in T$ for some $i\neq j$. Denote by $\spl(T)$ the set of splitting nodes of $T$. For $n<\omega$, let $\spl_n(T)$ be the set of $s\in\spl(T)$ such that there are exactly $n$-many splitting nodes strictly below $s$. Given another tree $T'\subseteq\omega^\omega$, write $T'\subseteq_n T$ when $T'\subseteq T$ and there is some $m<\omega$ such that all the elements of $\spl_n(T)$ have length $<m$ and $T'\cap\omega^m=T\cap\omega^m$. Note that $T'\subseteq_{n+1} T$ implies $T'\subseteq_n T$, and that the relation $\subseteq_n$ is transitive.

\begin{definition}\label{Defbtree}
   Let $b:\omega\to\omega\menos\{0\}$. We say that a poset $\Tbb$ is a \emph{$b$-tree forcing notion} if it satisfies the following properties
   \begin{enumerate}[({T}1)]
       \item\label{treeb} $\Tbb$ is a non-empty set of trees contained in $\seq_{<\omega}(b)$.
       \item\label{splb} If $T\in\Tbb$ and $s\in T$, then there is some splitting note $t\in T$ extending $s$.
       \item For $T,T'\in\Tbb$, $T'\leq T$ implies $T'\subseteq T$.
       \item If $T\in\Tbb$ and $s\in T$ then $T^s\in\Tbb$ and $T^s\leq T$.
       \item If $T\in\Tbb$, $n<\omega$ and $\{S_t:t\in\Lv_n(T)\}\subseteq\Tbb$ such that $S_t\leq T^t$ for all $t\in \Lv_n(T)$, then $S:=\bigcup_{t\in \Lv_n(T)} S_t\in \Tbb$, $S\leq T$ and $\{S_t:t\in \Lv_n(T)\}$ is a maximal antichain below $S$.
       \item If $\la T_n:n<\omega\ra$ is a decreasing sequence in $\Tbb$ and $T_{n+1}\subseteq_n T_n$ for al $n<\omega$, then $T:=\bigcap_{n<\omega}T_n\in\Tbb$ and $T\leq T_n$ for all $n<\omega$.
   \end{enumerate}
   When $\Tor$ is a $b$-tree forcing for some $b$ we say that $\Tor$ is a \emph{bounded-tree forcing notion}. Note that (T1) and (T2) imply $b\nleq^*1$.
   Denote by $\Tbb_b$ the poset of all conditions satisfying (T1) and (T2), ordered by $\subseteq$. It is clear that this is a $b$-tree forcing notion.
\end{definition}

\begin{example}
    \begin{enumerate}[(1)]
        \item Recall \emph{Sacks forcing} $\Sor:=\Tbb_2$ (where $2$ represents the constant function with value $2$). It is clearly a $2$-tree forcing notion.
        \item Let $\PTbb_b$ be the poset of conditions $T\in\Tbb_b$ such that, whenever $s\in\spl(T)$, $s{}^\frown\la i\ra\in T$ for every $i\in b(|s|)$. Judah, Goldstern and Shelah \cite{GJS} defined this poset and showed that, under CH, there is a CS iteration of such type of posets forcing $\add(\SNcal)=\aleph_2$ (even more, it shows that $\SNcal=[\R]^{\leq\aleph_1}$). In particular, these tree forcings are used to prove the consistency result (C1) presented in the introduction. 
    \end{enumerate}
\end{example}

\begin{lemma}
    Any $b$-tree forcing notion is proper and strongly $\omega^\omega$-bounding.\footnote{A poset $\Por$ is \emph{strongly $\omega^\omega$-bounding} if for any $p\in\Por$ and any $\Por$-name $\dot{x}$ of a function from $\omega$ into the ground model, there are a function $f$ from $\omega$ into the finite sets and some $q\leq p$ that forces $\dot{x}(n)\in f(n)$ for any $n<\omega$.}
\end{lemma}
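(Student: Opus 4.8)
The plan is to derive both properties from a single fusion scheme built around the ordering $\subseteq_n$, using the amalgamation property (T5) and the closure property (T6). First I would record the elementary observation that, since each $b(i)$ is finite, every level $\Lv_m(T)=T\cap\omega^m$ and every splitting level $\spl_n(T)$ is a finite set; this is exactly what makes ``deciding below each node of a level'' a finite amount of data. I would also note the compatibility of $\subseteq_n$ with (T5): if one amalgamates subtrees $S_t\le T^t$ over $t\in\Lv_m(T)$ with $m$ chosen larger than the length of every node of $\spl_n(T)$, then the resulting $S=\bigcup_t S_t$ satisfies $S\cap\omega^m=T\cap\omega^m$, and hence $S\subseteq_n T$, because nothing up to and including the $n$-th splitting level is altered.

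The heart of the argument is the following amalgamated-decision step. Given $T\in\Tbb$, $n<\omega$, and a $\Tbb$-name $\dot a$ for an element of the ground model $V$, fix $m$ past $\spl_n(T)$ as above. For each $t\in\Lv_m(T)$ we have $T^t\in\Tbb$ by (T4), so by density we may pick $S_t\le T^t$ and a value $v_t\in V$ with $S_t\Vdash\dot a=\check v_t$. Amalgamating by (T5) yields $T^*:=\bigcup_{t\in\Lv_m(T)}S_t\in\Tbb$ with $T^*\le T$, $T^*\subseteq_n T$, and $\{S_t:t\in\Lv_m(T)\}$ a maximal antichain below $T^*$; since $\Lv_m(T)$ is finite, $T^*\Vdash\dot a\in\{v_t:t\in\Lv_m(T)\}$, a finite set. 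For properness I would run the same step with a dense open $D\subseteq\Tbb$ in place of $\dot a$: choose each $S_t\le T^t$ with $S_t\in D$, so that the maximal antichain below $T^*$ is contained in $D$.

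Strong $\omega^\omega$-bounding then follows by iterating the decision step along a fusion sequence. Starting from $T_0:=p$ and a name $\dot x$ for a function $\omega\to V$, at stage $n$ I apply the step to $T_n$, $n$, and $\dot a:=\dot x(n)$ to obtain $T_{n+1}\subseteq_n T_n$ and a finite set $f(n)$ with $T_{n+1}\Vdash\dot x(n)\in f(n)$. By (T6) the intersection $q:=\bigcap_n T_n$ lies in $\Tbb$ with $q\le T_n$ for every $n$ (and one checks moreover $q\subseteq_n T_n$, so the sequence is a genuine fusion), whence $q\Vdash\forall n\,\dot x(n)\in f(n)$, which is exactly the property demanded in the footnote's definition.

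For properness, fix a countable $M\prec H_\theta$ with $\Tbb,p\in M$ and enumerate as $\langle D_n:n<\omega\rangle$ the dense open subsets of $\Tbb$ lying in $M$. I would run the same fusion, but at stage $n$ apply the decision step to $T_n$, $n$, and $D_n$, keeping each $T_n\in M$: since $T_n,D_n\in M$ and $\Lv_m(T_n)$ is finite, elementarity provides the witnesses $S_t\in D_n\cap M$ and hence $T_{n+1}\in M$. The fusion $q:=\bigcap_n T_n$ is then $(M,\Tbb)$-generic: for each $n$ the finite maximal antichain $\{S_t\}\subseteq D_n\cap M$ produced at stage $n$ is predense below $T_{n+1}$, hence below $q$, so $q$ forces $\dot G$ to meet $D_n\cap M$; as every dense open subset of $\Tbb$ in $M$ occurs as some $D_n$, $q$ is a master condition below $p$. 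I expect the main obstacle to be precisely the bookkeeping that legitimizes these fusions: verifying that amalgamating past $\spl_n(T)$ really yields $T^*\subseteq_n T$, that the intersection from (T6) sits $\subseteq_n$-below each $T_n$, and, on the properness side, that the entire construction can be carried out inside $M$ so that the witnessing antichains consist of members of $M$.
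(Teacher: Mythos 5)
Your proposal is correct and is precisely the ``standard argument'' that the paper's one-line proof delegates to \cite{GS93}: a fusion along $\subseteq_n$ in which (T4)/(T5) provide the finite amalgamated decision step at each stage and (T6) closes off the fusion, yielding strong $\omega^\omega$-bounding and, run inside a countable elementary submodel against an enumeration of its dense sets, properness. All the bookkeeping you flag (that amalgamating past $\spl_n(T)$ gives $T^*\subseteq_n T$, and that the witnessing antichains can be chosen in $M$) checks out against the paper's definitions, so there is nothing to add.
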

\begin{proof}
    The standard argument (see e.g. \cite{GS93}) works thanks to (T6).
\end{proof}

The following result is essential to show that $b$-tree forcings increase $\cov(\SNcal)$. It relies in the notation fixed in Definition \ref{DefIntRel}, and in Lemma \ref{dompart}.

\begin{mainlemma}\label{mainlemma}
    Let $\Tbb$ be a $b$-tree forcing notion and let $D\subseteq\omega^\omega$ be a dominating family of increasing functions. If $\sigma^f\in(\seq_{<\omega}(b))^\omega$ with $\hgt_{\sigma^f}=f^*$ for each $f\in D$ then, for any $T\in\Tor$, there is some $S\leq T$ in $\Tor$ and some $f\in D$ such that $[\sigma^f]_\infty\cap[S]=\emptyset$. In particular,    
    $\Tbb$ forces that $\tau\notin\bigcap_{f\in D}[\sigma^f]_\infty$ where    
    $\tau$ denotes generic real in $\prod b$ added by $\Tbb$. 
\end{mainlemma}
\begin{proof}
    Fix $T\in\Tbb$. Define $f:\omega\to\omega$ such that, for any $t\in \Lv_n(T)$, there is a splitting node of length $<f(n)$ extending $t$. By recursion, define $g(0)=0$ and $g(n+1)=f(g(n))$, which clearly yields an increasing function $g$. Set $I:=(I^g)^{*2}$, that is, $I_n=[g(2n),g(2(n+1))$ for each $n<\omega$. Since $D$ is dominating, by Lemma \ref{dompart} there is some $f\in D$ such that $I\sqsubseteq I^{f^*}$. For $n<\omega$, choose some $k_n$ (if exists) such that $I_{k_n}\subseteq I^{f^*}_n$. Note that there are only finitely many $n<\omega$ for which $k_n$ does not exist.

    Now we define $T_n$ by recursion on $n<\omega$ such that $T_n^t=T^t$ for any $t\in \Lv_{f^*(n)}(T_n)$. Put $T_0=T$. For the successor step, if $k_n$ does not exist then we set $T_{n+1}:=T_n$; else, when $k_n$ exists, for each $t\in\Lv_{g(2k_n)}(T_n)$ choose some $t'\in\Lv_{g(2k_{n}+1)}(T)$ extending $t$ (recall that $f^*(n)\leq g(2k_n)$) such that $t'$ is incompatible with $\sigma^f_{n+1}$. This is possible because there is a splitting node of length $<g(2k_n+1)$ extending $t$ and $|\sigma^f_{n+1}|=f^*(n+1)\geq g(2(k_n+1))$. Put $T_{n+1}:=\bigcup_{t\in\Lv_{f^*(n)}(T_n)}T^{t'}$. For each $t\in\Lv_{f^*(n)}(T_n)$, $T_{n+1}$ contains a splitting node of length $<f^*(n+1)$ extending $t'$. This indicates that $\la T_n:n<\omega\ra$ satisfies the conditions of (T6), so $S:=\bigcap_{n<\omega}T_n \in\Tbb$ and $S\leq T$. Even more, any branch of $S$ is incompatible with $\sigma^f(k)$ for all but finitely many $k<\omega$, so $[\sigma^f]_\infty\cap[S]=\emptyset$.
\end{proof}

\begin{corollary}\label{marc}
    $\SNcal\subseteq s^0$.
\end{corollary}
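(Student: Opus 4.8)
The plan is to read the corollary off directly from Main Lemma \ref{mainlemma} combined with the characterization of $\SNcal$ in Lemma \ref{charSN}, specializing to $b=2$ so that $\prod b = 2^\omega$ and $\Sor = \Tbb_2$ is a $2$-tree forcing notion (hence the Main Lemma applies to it). Concretely, I would fix $X \in \SNcal(2^\omega)$ and aim to verify the defining property of $s^0$: for every $p = T \in \Sor$ there should be $q = S \leq T$ in $\Sor$ with $[S] \cap X = \emptyset$.

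The only point requiring care is the height bookkeeping, since Lemma \ref{charSN} hands out slaloms of height $g$ for $g$ ranging over a dominating family, whereas Main Lemma \ref{mainlemma} wants slaloms $\sigma^f$ of height exactly $f^*$ for $f$ in a dominating family $D$ of increasing functions. So I would first fix such a $D$ and check that $\{f^* : f \in D\}$ is again dominating. Since each $f \in D$ is increasing we have $f \geq \id_\omega$, and an easy induction gives $f^*(n) \geq n$ for all $n$, whence $f^*(n+1) = f(f^*(n)+1) \geq f(n+1)$; thus $f \leq^* f^*$, and the starred family dominates everything $D$ dominates. Applying Lemma \ref{charSN} to this dominating family then produces, for each $f \in D$, some $\sigma^f \in (2^{<\omega})^\omega$ with $\hgt_{\sigma^f} = f^*$ and $X \subseteq [\sigma^f]_\infty$.

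With the slaloms $\sigma^f$ in hand, I would invoke Main Lemma \ref{mainlemma} with $\Tbb = \Sor$ and this same $D$: for any given $T \in \Sor$ it yields $S \leq T$ in $\Sor$ and some $f \in D$ with $[\sigma^f]_\infty \cap [S] = \emptyset$. Since $X \subseteq [\sigma^f]_\infty$, this gives $[S] \cap X = \emptyset$, and as $T$ was arbitrary we conclude $X \in s^0$. I do not expect a genuine obstacle here, because the Main Lemma carries all the combinatorial weight; the one step that must be handled honestly is the height-matching passage $f \mapsto f^*$ together with the verification that the starred family remains dominating, which is precisely what lets the conclusion of Lemma \ref{charSN} feed into the hypothesis of Main Lemma \ref{mainlemma}.
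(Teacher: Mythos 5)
Your proof is correct and takes exactly the route of the paper, whose entire proof of this corollary is ``Apply Main Lemma \ref{mainlemma} to $\Tor=\Sor$.'' You have merely made explicit the bookkeeping the paper leaves implicit, namely that $\{f^*:f\in D\}$ is still dominating so that Lemma \ref{charSN} supplies slaloms of height $f^*$ matching the hypothesis of the Main Lemma.
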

\begin{proof}
   Apply Main Lemma \ref{mainlemma} to $\Tor=\Sor$.
\end{proof}

Now we are ready to prove the main results of this paper.

\begin{theorem}\label{SacksSN}
   Assume CH. Then, any CS (countable support) iteration of length $\omega_2$ of bounded-tree forcing notions forces $\cov(\SNcal)=\aleph_2$.
\end{theorem}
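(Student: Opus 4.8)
The plan is to establish the two bounds $\cov(\SNcal)\le\aleph_2$ and $\cov(\SNcal)\ge\aleph_2$ separately, the first being soft and the second carrying all the content.

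For the upper bound I would first record the standard structural facts about the iteration $\Por_{\omega_2}$. Each bounded-tree forcing is proper and strongly $\omega^\omega$-bounding, and under CH a countable support iteration of proper forcings whose iterands have size $\le\aleph_1$ is proper, $\aleph_2$-cc, and preserves CH at every stage $\beta<\omega_2$; here the iterands qualify, since a bounded-tree forcing is a set of trees on a countable set, hence of size $\le\cfrak\le\aleph_1$ in the relevant intermediate model. Two consequences I will use repeatedly: every $V_\beta$ with $\beta<\omega_2$ has only $\aleph_1$ reals, and —because $\Por_{\omega_2}$ is $\aleph_2$-cc and $\cf(\omega_2)=\omega_2>\aleph_1$— every real of $V_{\omega_2}$ lies in some $V_\beta$ with $\beta<\omega_2$, while any $\aleph_1$ reals of $V_{\omega_2}$ lie in a common such $V_\beta$. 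Counting, $V_{\omega_2}$ has at most $\omega_2\cdot\aleph_1=\aleph_2$ reals while the generics give at least $\aleph_2$, so $\cfrak=\aleph_2$ and thus $\cov(\SNcal)\le\cfrak=\aleph_2$ by (SN2). Moreover $\Por_{\omega_2}$ is $\omega^\omega$-bounding (strong $\omega^\omega$-boundingness is preserved by countable support iterations of proper forcings), so a ground-model dominating family stays dominating throughout. Using CH I therefore fix in $V$ a dominating family $D\subseteq\omega^\omega$ of increasing functions with $|D|=\aleph_1$; note $\{f^*:f\in D\}$ is dominating as well, and both remain dominating in every $V_\beta$.

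The heart is the lower bound $\cov(\SNcal)\ge\aleph_2$, i.e.\ that no $\aleph_1$ strong measure zero sets cover the space. Let $\{X_\alpha:\alpha<\omega_1\}$ be strong measure zero. Applying Lemma \ref{charSN} in $V_{\omega_2}$ with the dominating family $\{f^*:f\in D\}$, for each $\alpha<\omega_1$ and $f\in D$ I obtain a witness $\sigma^f_\alpha$ with $\hgt_{\sigma^f_\alpha}=f^*$ and $X_\alpha\subseteq[\sigma^f_\alpha]_\infty$. These are $\aleph_1$ reals, so by the catching-up fact they all lie in a single $V_\beta$ with $\beta<\omega_2$. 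Set $X'_\alpha:=\bigcap_{f\in D}[\sigma^f_\alpha]_\infty\supseteq X_\alpha$; since $D$ is still dominating in $V_\beta$, each $X'_\alpha$ is strong measure zero in $V_\beta$ with all its witnesses in $V_\beta$. Now I apply Main Lemma \ref{mainlemma} inside $V_\beta$ to the iterand $\Tbb_\beta$, the family $D$, and the witnesses $(\sigma^f_\alpha)_{f\in D}$, once for each $\alpha$: the $\beta$-th generic real $\tau_\beta$ satisfies $\tau_\beta\notin\bigcap_{f\in D}[\sigma^f_\alpha]_\infty=X'_\alpha$. Since $\tau_\beta\notin[\sigma^f_\alpha]_\infty$ is arithmetic in the reals $\tau_\beta,\sigma^f_\alpha$ and hence absolute, the conclusion $\tau_\beta\notin X_\alpha$ survives to $V_{\omega_2}$ for every $\alpha$; so $\tau_\beta$ is a point of the space outside $\bigcup_\alpha X_\alpha$, as required.

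The step I expect to be the genuine obstacle —beyond the Main Lemma itself, which already packages the combinatorics— is reconciling the space in which $\tau_\beta$ lives with the space in which the $X_\alpha$ are presented: $\Tbb_\beta$ is only a $b_\beta$-tree forcing for whatever $b_\beta$ occurs at stage $\beta$, so $\tau_\beta\in\prod b_\beta$, whereas the covering family is naturally given in a fixed space such as $2^\omega$. Because $\cov(\SNcal)$ is independent of the underlying space, I would fix $2^\omega$ as the reference and, working in $V_\beta$, transfer each $X'_\alpha$ to $F_{b_\beta}^{-1}[G[X'_\alpha]]\subseteq\prod b_\beta$, where $G=F_2\colon 2^\omega\to[0,1]$ is the binary-expansion map and $F_{b_\beta}\colon\prod b_\beta\to[0,1]$ is the canonical surjection; since both maps send strong measure zero sets to strong measure zero sets under images and preimages, this yields, inside $V_\beta$, strong measure zero witnesses in $\prod b_\beta$ to which Main Lemma \ref{mainlemma} applies directly, and pushing $\tau_\beta$ forward through $F_{b_\beta}$ and back through $G^{-1}$ produces the desired point of $2^\omega$ missing every $X_\alpha$. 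Everything else —the catching-up, the preservation of CH at intermediate stages, and the preservation of $\omega^\omega$-boundingness that keeps $D$ dominating— is standard, so the proof amounts to assembling these facts around the Main Lemma and handling the bookkeeping of this one space transfer.
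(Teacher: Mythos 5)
Your proposal is correct and follows essentially the same route as the paper: fix a ground-model dominating family $D$ (of size $\aleph_1$ by CH), use $\aleph_2$-cc to catch the $\aleph_1$-many witnesses $\sigma^f_\alpha$ at an intermediate stage $\beta<\omega_2$, transfer them into $\prod b_\beta$ via $F_2$ and $F_{b_\beta}$, and apply Main Lemma \ref{mainlemma} to the stage-$\beta$ iterand so that its generic real escapes every $\bigcap_{f\in D}[\sigma^f_\alpha]_\infty\supseteq X_\alpha$, with absoluteness carrying the conclusion to the final model. The only cosmetic difference is that you argue semantically with reals of $V_{\omega_2}$ reflected to $V_\beta$ and spell out the soft upper bound $\cov(\SNcal)\le\cfrak=\aleph_2$, whereas the paper works directly with names and leaves the upper bound implicit.
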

\begin{proof}
   Assume that $\la\Por_\alpha:\alpha\leq\omega_2\ra$ results from such iteration and fix any dominating family $D$ of increasing functions in the ground model (by CH, $|D|=\aleph_1$). Let $D^*:=\{f^* : f\in D\}$, which is also a dominating family. Assume that $\{\dot{X}_\xi:\xi<\omega_1\}$ is a family of $\Por$-names of members of $\SNcal(2^\omega)$. For each $\xi<\omega_1$ and $f\in D$, there is a $\Por$-name $\dot{\sigma}^f_\xi$ for a function in $(2^{<\omega})^\omega$ such that $\Por$ forces $\hgt_{\dot{\sigma}^f_\xi}=f^*$ and $X_\xi\subseteq[\dot{\sigma}^f_\xi]_\infty$. Since $\Por_{\omega_2}$ has $\aleph_2$-cc,  there is some $\alpha<\aleph_1$ such that $\dot{\sigma}^f_\xi$ is a $\Por_\alpha$-name for each $f\in D$ and $\xi<\omega_1$. Let $\dot{\Tor}$ be a $\Por_\alpha$-name of a bounded-tree forcing notion such that $\Por_{\alpha+1}=\Por_\alpha\ast\dot{\Tor}$.

   Fix a $\Por_\alpha$-generic set $G$ over $V$. Work in $V[G]$. Let $b:\omega\to\omega$ be a function such that $\Tor:=\dot{\Tor}[G]$ is a $b$-tree forcing notion. Thanks to the maps $F_2$ and $F_b$ (see Section \ref{SecPre}), since $D^*$ is still a dominating function in $V[G]$ and $\bigcap_{f\in D}[\sigma^f_\xi]_\infty\in\SNcal(2^\omega)$ for each $\xi<\omega_1$, we can find some $\rho^f_\xi\in(\seq_{<\omega}(b))^\omega$ with $\hgt_{\rho^f_\xi}=f^*$ for each $f\in D$ and $\xi<\omega_1$ such that $F_b^{-1}F_2''\bigcap_{f\in D}[\sigma^f_\xi]_\infty\subseteq[\rho^f_\xi]_\infty$. By Main Lemma \ref{mainlemma}, $\Tor$ forces $\tau_\alpha\notin\bigcap_{f\in D}[\rho^f_\xi]_\infty$ for each $\xi<\omega_1$ (here, $\tau_\alpha\in\prod b$ is the generic real added by $\Tor$), so $F^{-1}_2(F_b(\tau_\alpha))\notin\bigcap_{f\in D}[\sigma^f_\xi]_\infty$ (since $\tau_\alpha$ is a generic real, it can be shown by a density argument that $F_b(\tau_\alpha)$ has a unique pre-image under $F_2$).

   Therefore, $\Por_{\omega_2}$ forces that $F^{-1}_2(F_b(\tau_\alpha))\notin\bigcup_{\xi<\omega_1} X_\xi$.
\end{proof}

\begin{theorem}\label{3SN}
   Assume CH and that $\lambda$ is an infinite cardinal such that $\lambda^{\aleph_1}=\lambda$. Then, there is a proper $\omega^\omega$-bounding poset with $\aleph_2$-cc forcing $\cof(\Nwf)=\afrak=\ufrak=\ifrak=\aleph_1$, $\cov(\SNcal)=\aleph_2$ and $\cof(\SNcal)=\lambda$. In particular, it is consistent with ZFC that $\non(\SNcal)<\cov(\SNcal)<\cof(\SNcal)$.
\end{theorem}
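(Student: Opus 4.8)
The plan is to realize the model as a two-step forcing $\Por=\Cor\ast\dot\Por_{\omega_2}$, where $\Cor:=\Fn_{<\aleph_1}(\lambda\times\aleph_1,\aleph_1)$ is the $\aleph_1$-Cohen poset and $\dot\Por_{\omega_2}$ is a $\Cor$-name for the CS iteration of length $\omega_2$ of Sacks forcing $\Sor$ (which is a bounded-tree forcing notion). The design is that $\Cor$ first pushes $\dfrak_{\aleph_1}$ and $2^{\aleph_1}$ up to $\lambda$, while the subsequent tree iteration is engineered to simultaneously (a) force $\cov(\SNcal)=\aleph_2$ via Theorem \ref{SacksSN}, (b) keep all the classical invariants at $\aleph_1$ as in the Sacks model, and (c) \emph{not disturb} $\dfrak_{\aleph_1}=\lambda$, so that Yorioka's Theorem \ref{Ycof} converts the pinned value of $\dfrak_{\aleph_1}$ into $\cof(\SNcal)=\lambda$.

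First I would analyze $\Cor$. Being $<\!\aleph_1$-closed, it adds no reals, so CH is preserved and every cardinal invariant of the real line is unchanged; and since $\aleph_1^{<\aleph_1}=\aleph_1$ under CH, it is $\aleph_2$-cc and preserves cardinals. By Lemma \ref{kappaCohen} (with $\kappa=\aleph_1$, using $\lambda>\aleph_1=\aleph_1^{<\aleph_1}$) it forces $\dfrak_{\aleph_1}\ge\lambda$, while the hypothesis $\lambda^{\aleph_1}=\lambda$ together with $|\Cor|=\lambda$ yields $2^{\aleph_1}=\lambda$; hence $\dfrak_{\aleph_1}=\lambda$. Thus in $V^{\Cor}$ one has CH, $2^{\aleph_1}=\lambda=\lambda^{\aleph_1}$ and $\dfrak_{\aleph_1}=\lambda$.

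Next, working over $V^{\Cor}\models$ CH, I would run the CS iteration $\la\Por_\alpha:\alpha\le\omega_2\ra$ of copies of $\Sor$. Because $\Sor$ is coded by reals, under CH each intermediate $\Por_\alpha$ with $\alpha<\omega_2$ has size $\le\aleph_1$ and forces CH, so $\Por_{\omega_2}$ is proper, $\omega^\omega$-bounding and $\aleph_2$-cc. By Theorem \ref{SacksSN} it forces $\cov(\SNcal)=\aleph_2$, and by the usual preservation properties of the Sacks iteration (valid over any model of CH) it forces $\cof(\Nwf)=\afrak=\ufrak=\ifrak=\aleph_1$; in particular $\non(\SNcal)\le\non(\Nwf)\le\cof(\Nwf)=\aleph_1$ and $\minadd=\supcof=\aleph_1$. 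The conceptual heart is checking that this iteration preserves $\dfrak_{\aleph_1}=\lambda$: each $\Por_\alpha$ ($\alpha<\omega_2$), having size $\le\aleph_1$, is $\aleph_1^{\aleph_1}$-good by Lemma \ref{smallgood}; since the CS iteration takes a direct limit at $\omega_2$ (every countable support is bounded below $\omega_2$) and $\Por_{\omega_2}$ is $\aleph_2$-cc with $\cf(\omega_2)=\aleph_2>\aleph_1$, Lemma \ref{goodit} makes $\Por_{\omega_2}$ itself $\aleph_1^{\aleph_1}$-good, so Lemma \ref{dombig} gives $\dfrak_{\aleph_1}\ge|\dfrak_{\aleph_1}^{V^{\Cor}}|=\lambda$. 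The matching upper bound $\dfrak_{\aleph_1}\le 2^{\aleph_1}\le\lambda$ comes from $|\Por_{\omega_2}|=\aleph_2\le\lambda$ and $\lambda^{\aleph_1}=\lambda$, so $\dfrak_{\aleph_1}=\lambda$ in the final model.

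Finally, in $V^{\Cor\ast\Por_{\omega_2}}$ we have $\minadd=\supcof=\aleph_1$, whence Theorem \ref{Ycof} yields $\cof(\SNcal)=\dfrak_{\aleph_1}=\lambda$, while $\cov(\SNcal)=\aleph_2$ and $\non(\SNcal)=\aleph_1$. The composite $\Cor\ast\Por_{\omega_2}$ is proper and $\omega^\omega$-bounding (both factors are) and $\aleph_2$-cc (both factors are, over the relevant models of CH), as the statement demands. Choosing $\lambda>\aleph_2$ (for instance $\lambda=\aleph_3$ when GCH holds in the ground model) delivers $\non(\SNcal)<\cov(\SNcal)<\cof(\SNcal)$. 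I expect the genuinely delicate points to be the bookkeeping that keeps $2^{\aleph_1}=\lambda$ fixed across \emph{both} steps and the verification that the intermediate stages of the tree iteration retain CH (so that Lemma \ref{smallgood} applies to each $\Por_\alpha$); the preservation of $\dfrak_{\aleph_1}$, which would otherwise be the obstacle, is exactly what the machinery of Section \ref{SecPres} was built to supply.
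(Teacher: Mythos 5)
Your proposal is correct and follows essentially the same route as the paper: the two-step forcing $\Fn_{<\aleph_1}(\lambda\times\aleph_1,\aleph_1)$ followed by the CS iteration of Sacks forcing of length $\omega_2$, with Lemma \ref{kappaCohen} pushing $\dfrak_{\aleph_1}$ up to $\lambda$, Theorem \ref{SacksSN} giving $\cov(\SNcal)=\aleph_2$, Lemmas \ref{smallgood}, \ref{goodit} and \ref{dombig} preserving $\dfrak_{\aleph_1}=\lambda$, and Theorem \ref{Ycof} converting this into $\cof(\SNcal)=\lambda$. The only addition worth noting is your explicit remark that the ``in particular'' clause requires choosing $\lambda>\aleph_2$, which the paper leaves implicit.
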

\begin{proof}
   We show that $\Fn_{<\omega_1}(\lambda\times\omega_1,\omega_1)$ followed by the CS iteration of $\Sor$ of length $\aleph_2$ is the desired poset. By CH, $\Fn_{<\omega_1}(\lambda\times\omega_1,\omega_1)$ has $\aleph_2$-cc, and it is clear that it is $<\omega_1$-closed, so it is proper and preserves cofinalities (and it is obviously $\omega^\omega$-bounding since it does not add new reals). Even more, in the $\Fn_{<\omega_1}(\lambda\times\omega_1,\omega_1)$-forcing extension, CH still holds, $2^{\aleph_1}=\lambda$ and, by Lemma \ref{kappaCohen}, $\dfrak_{\omega_1}=\lambda$.

   Now work in the  $\Fn_{<\omega_1}(\lambda\times\omega_1,\omega_1)$-extension. Let $\Qor=\la\Por_\alpha,\Sor:\alpha<\omega_2\ra$ be the CS iteration of Sacks forcing of length $\omega_2$. It is clear that $\Qor$ forces $\cof(\Nwf)=\afrak=\ufrak=\ifrak=\aleph_1$ and, by Theorem \ref{SacksSN}, it forces $\cov(\SNcal)=\cfrak=\aleph_2$. In addition, since $\supcof\leq\cof(\SNcal)$, by Theorem \ref{Ycof}, $\Qor$ forces that $\cof(\SNcal)=\dfrak_{\omega_1}$.

   It remains to show that $\Qor$ forces $\dfrak_{\omega_1}=\lambda$. Since $\Qor$ has $\aleph_2$-cc and size $\aleph_2$, it forces $2^{\aleph_1}=\lambda$. On the other hand, for each $\alpha<\omega_2$, $|\Por_\alpha|=\aleph_1$, so $\Por_\alpha$ is $\omega_1^{\omega_1}$-good by Lemma \ref{smallgood}. Hence, by Lemma \ref{goodit}, $\Qor$ is $\omega_1^{\omega_1}$-good and, by Lemma \ref{dombig}, $\Qor$ forces $\lambda\leq\dfrak_{\omega_1}$.
\end{proof}

\begin{remark}
   In the proof above it can be shown in addition that the first $\omega_2$-many $\omega_1$-Cohen reals form an unbounded family of $\omega_1^{\omega_1}$ even after the iteration of Sacks forcing. Hence, the final model satisfies $\bfrak_{\omega_1}=\aleph_2$.
\end{remark}

\begin{remark}
   Judah, Miller and Shelah \cite{JMS} have proved that, in Sacks model, $\add(s^0)=\aleph_1$ and $\cov(s^0)=\cfrak$. So Corollary \ref{marc} also implies that $\cov(\SNcal)=\cfrak$ in this model.
\end{remark}

\section{Discussions}\label{SecDisc}

Main Lemma \ref{mainlemma} can also be proved for Silver-like type of posets, or more generally, for lim-sup creature type forcing notions obtained by finitary creating pairs as in \cite{RS}. Therefore, these type of posets can be included as iterands in Theorem \ref{SacksSN}. Moreover, it can be concluded that $\SNcal$ is contained in the Marczewski-type ideal corresponding to Silver forcing.

Bartoszy\'nski and Shelah \cite[Thm. 3.3]{BaS} proved that $\non(\SNcal)$ can be increased by CS products of Silver-like posets. In fact, the same argument applies to CS products of posets of the form $\PTbb_b$ with $b$ diverging to infinity. Concretely, assuming CH, if $\kappa^{\aleph_0}=\kappa$, $I$ is a set of size $\kappa$ and $\{ b_i:i\in I\}\subseteq\omega^\omega$ is a family of functions diverging to infinity, then the CS product of $\PTbb_{b_i}$ with $i\in I$ forces $\dfrak=\aleph_1$ (it is $\omega^\omega$-bounding) and $\non(\SNcal)=\cfrak=\kappa$.

A very natural question that comes from our main result is whether a version of Theorem \ref{SacksSN} for CS products can be proved. By methods like in \cite{GS93,KM} it can be shown that any CS product of bounded-tree forcing notions remains proper and strongly $\omega^\omega$-bounding. However, it is not obvious how the proof of Main Lemma \ref{mainlemma} can be translated to show that such a CS product increases $\cov(\SNcal)$. This would generalize the consistency result of Theorem \ref{SacksSN} in the sense that $\cov(\SNcal)$ could be forced larger than $\aleph_2$.

By well known methods and results from \cite{Yorioka}, the following open problem is the only one remaining to settle that the diagram of inequalities in Figure \ref{FigSN} is complete.

\begin{figure}
\begin{center}
  \includegraphics{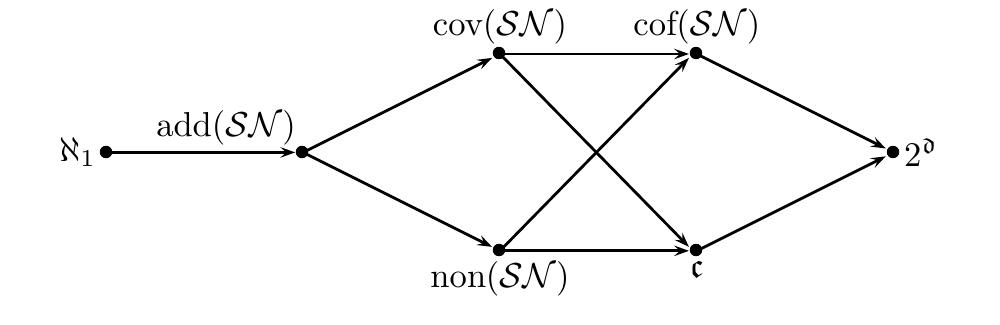}
  \caption{The arrows mean that $\leq$ is provable in ZFC.}
  \label{FigSN}
\end{center}
\end{figure}

\begin{question}\label{Qadd}
   Is it consistent with ZFC that $\add(\SNcal)<\min\{\cov(\SNcal),\non(\SNcal)\}$?
\end{question}

This work provides the first example where 3 cardinals associated with $\SNcal$ can be pairwise different. To go one step further, we propose the following problem.

\begin{question}\label{Q4}
Is it consistent with ZFC that the four cardinal invariants associated with $\SNcal$ are pairwise different?
\end{question}

The following idea may be useful to answer the question above.
Quite recently, the first and third authors with Brendle \cite{BCM} constructed a ccc poset forcing
\[\add(\Nwf)=\add(\Mwf)<\cov(\Nwf)=\non(\Mwf)<\cov(\Mwf)=\non(\Nwf)<\cof(\Mwf)=\cof(\Nwf).\]
In the same model, $\cov(\SNcal)=\cov(\Nwf)<\non(\SNcal)=\non(\Nwf)$ by (S3) and because this model is obtained by a FS iteration of length with cofinality $\mu$ (where $\mu$ is the desired value for $\non(\Mwf)$), and it is well known that such cofinality becomes an upper bound of $\cov(\SNcal)$ (see e.g. \cite[Lemma 8.2.6]{BJ}). However, tools to deal with $\add(\SNcal)$ and $\cof(\SNcal)$ in this situation are still unknown.

It would also be very useful to have a stronger characterization of $\cof(\SNcal)$. So far, Theorem \ref{Ycof} is restricted to $\minadd=\supcof$, which implies $\add(\SNcal)=\non(\SNcal)$, so another characterization that allows $\add(\SNcal)<\non(\SNcal)$ would lead to methods to solve Question \ref{Q4}.


{\small
\bibliography{left}
\bibliographystyle{alpha}
}

\end{document}